\DeclarePairedDelimiter{\abs}{\lvert}{\rvert}
\DeclareMathOperator{\dist}{dist}
\DeclareMathOperator{\hess}{Hess}
\renewcommand{\phi}{\varphi}
\renewcommand{\epsilon}{\varepsilon}
\renewcommand{\vec}{\boldsymbol}
\newcommand{\numberset}{\mathbb}
\newcommand{\eps}{\varepsilon}
\newcommand{\bb}{\beta}
\newcommand{\N}{\numberset{N}}
\newcommand{\R}{\numberset{R}}
\newcommand{\norm}[1]{\left\|#1\right\|}
\newcommand{\tangvet}{\textsl{t}}
\newcommand{\orig}{\mathbf{0}}
\theoremstyle{definition}
\newtheorem{definition}{Definition}[section]
\theoremstyle{definition}                                                                         
\newtheorem{definizione}{Definizione}[section]
\theoremstyle{definition}                                                                         
\newtheorem{rmk}[definizione]{Remark}
\theoremstyle{plain}                                                                              
\newtheorem{thm}[definizione]{Theorem}
\theoremstyle{plain}    
\newtheorem{prop}[definizione]{Proposition}
\theoremstyle{plain}     
\newtheorem{lemma}[definizione]{Lemma}
\theoremstyle{plain}
\newtheorem{cor}[definizione]{Corollary}
\theoremstyle{definition}
\numberwithin{equation}{section}
\begin{document}
\title[Critical points of solutions of Robin problems]{On the critical points of solutions of Robin boundary problems}
\thanks{This work was supported by INdAM-GNAMPA.\\
F.D.R. has been supported by the Juan de la Cierva fellowship Grant JDC2022-048890-I funded by MICIU/AEI/10.13039/501100011033 and by the “European Union NextGenerationEU/PRTR”.
}

 \author{Fabio De Regibus}
\address{ \vspace{-0.4cm}
	\newline 
	\textbf{{\small Fabio De Regibus}} 
	\vspace{0.15cm}
	\newline \indent  
	Departamento de An\'alisis Matem\'atico, Universidad de Granada, Avenida de la Fuente Nueva S/N, 18071, Granada, Spain.}
\email{fabioderegibus@ugr.es}

 \author{Massimo Grossi}
\address{ \vspace{-0.4cm}
	\newline 
	\textbf{{\small Massimo Grossi}} 
	\vspace{0.15cm}
	\newline \indent  
	Dipartimento di Scienze di Base e Applicate per l'Ingegneria, Universit\`a di Roma ``La Sapienza'', Via Antonio Scarpa 14, 00161, Roma, Italy. }
\email{massimo.grossi@uniroma1.it}

\begin{abstract}
In this paper we prove the uniqueness of the critical point for \emph{stable} solutions of the Robin problem
\begin{equation*}
\begin{cases}
-\Delta u=f(u)&\text{in }\Omega\\
u>0&\text{in }\Omega\\
\partial_\nu u+\beta u=0&\text{on }\partial\Omega,
\end{cases}
\end{equation*}
where $\Omega\subseteq\R^2$ is a smooth and bounded domain with strictly positive curvature of the boundary, $f\ge0$ is a smooth function and $\beta>0$. Moreover, for $\bb$ large the result fails as soon as the domain is no more convex, even if it is very close to be: indeed, in this case it is possible to find solutions with an arbitrary large number of critical points.
\end{abstract}
\maketitle

\section{Introduction and main results}
\label{S1}

Let $\Omega$ be a smooth, bounded and simply connected domain of $\R^2$. We are interested on the number of critical points of the solutions $u_\bb$ of the following semilinear elliptic problem with Robin boundary conditions
\[
\hypertarget{PB}{(\text{P}_\bb)\qquad}
\begin{cases}
-\Delta u=f(u)&\text{in }\Omega\\
\partial_\nu u+\bb u=0&\text{on }\partial\Omega,
\end{cases}
\]
where $f:\R\to\R$ is a smooth function, $\nu$ denotes the other unit normal vector to $\partial\Omega$ and $\bb\in\R$ is a positive parameter.

This is a classical problem in partial differential equations and it is well known that the shape of the solutions is strongly influenced by the geometry of the domain $\Omega$ and, of course, by the nonlinearity $f$.

A first interesting result linking the geometry and the topology of the domain with the geometry of the solution $u$ can be deduced from the Poincaré-Hopf Theorem, for instance see~\cite{Mbook}. In particular it follows that if $u$ is a positive solution of $-\Delta u=f(u)$ such that $\partial_\nu u\not=0$ on $\partial\Omega$, with isolated critical points $x_1,\dots,x_k$, then one can prove
\[
\sum_{i=1}^k\mathrm{ind}(\nabla u,{x_i})=(-1)^N\chi(\Omega),
\]
where $\chi(\Omega)$ is the Euler characteristic of $\Omega$. Recall that if $y$ is an isolated zero of a vector field $T$ then we denote by $\mathrm{ind}(T,y):=\deg(T,B_r(y),\orig)$ the Browner degree of $T$ in a ball centered in $y$ of radius $r>0$ small enough (let us point out that the previous formula is general and does not depend on the fact that $u$ is a solution of an elliptic equation). Furthermore, it is natural to ask when the sum reduces to a minimal number of elements. Hence, if $\Omega$ is a contractible domain we have $\chi(\Omega)=1$ and then the question becomes to investigate under which conditions uniqueness of the critical point holds or not.

We start by recalling some result in the Dirichlet case. Here the literature is very wide  and it is impossible to give a complete list of references, so we mainly focus on the results which are more strictly related to the rest of the paper.

A first important result has been proved by Makar-Limanov for the torsion problem, i.e. $f\equiv1$. In~\cite{ML71}, it is proved that if $\Omega$ is a convex domain in $\R^2$, then the solution of the torsion problem has a unique nondegenerate critical point. Moreover, as mentioned in~\cite{KawBook}, the $1/2$ strict concavity of the solution is proved, i.e. $u^{1/2}$ is a concave function. 

A similar result is true for the first Dirichlet eigenfunction, $f(u)=\lambda u$, as it was proved by Brascamp and Lieb~\cite{bl} (se also the paper by Acker, Payne and Philippin~\cite{app}). In this case the solution turns out to be $\log$-concave, that is $\log(u)$ is concave. In particular, in both cases we have that the superlevel sets $\{u>c\}$ are convex.

A very general result on the uniqueness of the critical point of solutions can be deduced from the seminal paper~\cite{gnn} by Gidas, Ni and Nirenberg. Indeed, if $f$ is a Lipschitz continuous function and $\Omega\subseteq\R^N$ is a smooth and bounded domain which is symmetric with respect to the hyperplanes $x_i=0$ for any $i=1,\dots,N$ and convex with respect to any direction $x_1,\dots,x_N$, then $u$ has exactly one critical point and moreover the superlevel sets are star-shaped with respect to the origin (but in general they are not convex, see the work by Hamel, Nadirashvili and Sire~\cite{hns}).

To remove the symmetry assumptions on $\Omega$, keeping $f$ very general is an hard task. A very interesting contribution in this direction is the result in~\cite{cc} by Cabré and Chanillo, where the uniqueness of the critical point is proved for stable solutions in convex planar domains with strictly positive curvature of the boundary $\partial\Omega$.
We recall that a solution $u$ is said to be \emph{stable} if the linearized operator at $u$ is non negative definite. The theorem has been extended to the case of vanishing curvature in~\cite{dgm}. We point out that it is still an open problem to prove if the superlevel sets of stable solutions are convex or not, see~\cite{hns} again.
The preceding results had been extended also to the case of manifolds, see the recent paper~\cite{GP23}.

All the results mentioned before hold in convex domains and it is well known that, in general, we cannot expect uniqueness of the critical point in non convex domains, as shown for example by the case of the torsion problem in a dumbbell domain (see for instance \cite{sperb}).
Sometimes, it is still possible to recover uniqueness of the critical point in non convex domain, under suitable assumptions, see for instance~\cite{BDRG23} and~\cite{GG24}.

Also in the case $\Omega$ is "not far" from being convex and the minimum of the curvature of $\partial\Omega$ is very close to $0$ the situation may change drastically: indeed, not only the uniqueness of the critical point is lost, but it is not even possible to have a bound on the number of critical points, see~\cite{GG23}.\\

In the case of Robin boundary conditions there are no many results, to our knowledge. The first one we want to mention is due to Sakaguchi in~\cite{Sak90}. Here, uniqueness of the critical point is proved for solutions of~\hyperlink{PB}{$(\text{P}_\bb)$} if one assumes that the nonlinearity $f$ satisfies $f'\le0$ and $f(0)<0$. 
As a particular case, the author also shows the result for the solution of the torsion problem $f\equiv1$.

In this last case it is also possible to see that, if $\bb$ is large enough (i.e. if we are close to the Dirichlet case), $u_\bb$ is strictly $1/2$-concave as proved by Crasta and Fragalà in~\cite{CF21}. They also show that the first Robin eigenfunction of the Laplacian is strictly $\log$-concave, again for $\bb$ large. In particular this implies\footnote{note that with the notation in~\cite{CF21} $\log$-concavity means that $\log(u_\bb)$ has positive definite Hessian matrix and in particular this implies uniqueness of the critical point for $u_\bb$.} the uniqueness of the critical point for large values of $\bb$.

It is interesting to point out here that $\log$-concavity of the first Robin eigenfunction of the Laplacian is no more true if $\bb$ is close to $0$, that is if we are close to the Neumann case. This has been proved by Andrews, Clutterbuck and Hauer in~\cite{ACH21}.\\

In this paper we want to study the number critical point of stable solutions of problem~\hyperlink{PB}{$(\text{P}_\bb)$}, extending Cabré and Chanillo's result in~\cite{cc} from the Dirichlet to the Robin case. Let us mention that in general there is no hope to cover also the Neumann case, see Remark~\ref{noNeum}.

Finally, we show that as soon as the domain is no more convex, even if very close to be, uniqueness of the critical point is lost for $\bb$ large, as in the Dirichlet case.\\

Before stating our main result, let us recall the definition of stable solution for the case of Robin boundary conditions.

\begin{definition}\label{def:stab}$u_\bb$ solution of problem~\hyperlink{PB}{$(\text{P}_\bb)$} is said to be \emph{stable} if the linearized operator at $u_\bb$ is nonnegative definite, i.e. if for all $\phi\in H^1(\Omega)$ one has
\[
\int_{\Omega}|\nabla\phi|^{2}\,dx+\bb\int_{\partial\Omega}\phi^2\,d\sigma-\int_{\Omega} f'(u_\bb)|\phi|^{2}\,dx\ge0.
\]
\end{definition}

We can now state the main result of the paper. 

\begin{thm}
\label{MainThm}
Let $\Omega\subseteq\R^2$ be a smooth and bounded domain such that the curvature $\kappa$ of its boundary is strictly positive, i.e. $\kappa>0$ on $\partial\Omega$ and $f\ge0$ not identically equal to $0$. Assume that there exists $\beta_0>0$ such that, for all $\bb>\bb_0$, problem~\hyperlink{PB}{$(\text{P}_\bb)$} has a stable solution $u_\bb$ that satisfies
\begin{equation}
\label{Linf}
\norm{u_\bb}_{L^\infty(\Omega)}\le C,
\end{equation}
for some $C:=C(\bb_0)>0$. Then, for all $\bb>\bb_0$, $u_\bb$ has a unique critical point, which is a nondegenerate maximum.
\end{thm}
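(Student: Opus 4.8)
The plan is to adapt the strategy of Cabré and Chanillo \cite{cc} from the Dirichlet to the Robin setting, the genuinely new work being the analysis of $\nabla u_\bb$ on $\partial\Om$. First I would record the elementary structural facts. Since $f\ge0$, the solution is superharmonic, so it attains its minimum over $\overline\Om$ on $\partial\Om$; evaluating the Robin condition at a boundary minimum gives $\bb u_\bb=-\partial_\nu u_\bb\ge0$, whence $u_\bb\ge0$ and, by the strong maximum principle together with $f\not\equiv0$, $u_\bb>0$ in $\overline\Om$. In particular $\partial_\nu u_\bb=-\bb u_\bb<0$ on $\partial\Om$, so $\nabla u_\bb\neq0$ there and every critical point lies in the interior. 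By the uniform bound \eqref{Linf} and elliptic estimates the family $\{u_\bb\}$ is bounded in $C^{1,\alpha}(\overline\Om)$ uniformly in $\bb$, a fact I will use repeatedly.

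Next I would exploit stability. By Definition~\ref{def:stab} the Robin linearised operator $L:=-\Delta-f'(u_\bb)$ has nonnegative first eigenvalue $\lambda_1\ge0$, with a first eigenfunction $\phi_1>0$ in $\overline\Om$ solving $L\phi_1=\lambda_1\phi_1$ and $\partial_\nu\phi_1+\bb\phi_1=0$. Differentiating the equation shows that every directional derivative $v_a:=a\cdot\nabla u_\bb$ ($a\in S^1$) satisfies $Lv_a=0$ in $\Om$, and a direct computation gives that $\sigma_a:=v_a/\phi_1$ solves
\[
\mathrm{div}(\phi_1^2\nabla\sigma_a)=\lambda_1\phi_1^2\sigma_a\qquad\text{in }\Om,
\]
an operator whose zeroth-order coefficient $-\lambda_1\phi_1^2$ is $\le0$. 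Hence $\sigma_a$ obeys the strong maximum principle: it admits no positive interior maximum and no negative interior minimum, so no nodal domain of $v_a$ is compactly contained in $\Om$. (Equivalently, a compactly contained positive nodal domain $D$ would force $\lambda_1^{\mathrm{Dir}}(L,D)=0<\lambda_1^{\mathrm{Dir}}(L,\Om)$, contradicting $\lambda_1^{\mathrm{Dir}}(L,\Om)\ge\lambda_1\ge0$.) This part is essentially identical to the Dirichlet case and I would quote \cite{cc} where convenient.

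The heart of the matter — and the step I expect to be the main obstacle — is the boundary analysis, which in the Dirichlet case is trivial since $u\equiv0$ forces $\nabla u=(\partial_\nu u)\nu$. Here, on $\partial\Om$,
\[
\nabla u_\bb=(\partial_\tau u_\bb)\,\tau-\bb u_\bb\,\nu,
\]
so $\nabla u_\bb$ carries a tangential component. The plan is to show that for $\bb$ large the normal term dominates: the uniform $C^1$ bound gives $|\partial_\tau u_\bb|\le C$, while $\bb u_\bb=-\partial_\nu u_\bb$ stays bounded away from $0$ (along $\bb\to\infty$ the $u_\bb$ subconverge in $C^1$ to a nontrivial Dirichlet solution $u_\infty$, for which Hopf's lemma gives $\partial_\nu u_\infty<0$ uniformly, so $\partial_\tau u_\bb\to\partial_\tau u_\infty=0$). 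Consequently $|\partial_\tau u_\bb|<\bb u_\bb$ on $\partial\Om$ for $\bb$ large, so $\nabla u_\bb$ lies strictly in $\{X\cdot\nu<0\}$ and stays within $90^\circ$ of $-\nu$. Since $\kappa>0$, the inward normal $-\nu$ turns monotonically by $2\pi$ as $\partial\Om$ is traversed once; hence $\nabla u_\bb$ has winding number $1$ along $\partial\Om$ and, for each $a$, $v_a$ changes sign exactly twice on $\partial\Om$.

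Finally I would combine these ingredients. Exactly two boundary sign changes of $v_a$, together with the fact that every nodal domain meets $\partial\Om$, force $v_a$ to have precisely two nodal domains, so $Z_a:=\{v_a=0\}$ is a single simple arc joining the two boundary sign-change points. The winding computation yields, through the Poincaré–Hopf formula recalled in the introduction, $\sum_i\mathrm{ind}(\nabla u_\bb,x_i)=\chi(\Om)=1$, so it remains to show every critical point has index $+1$. The trace identity $\mathrm{tr}\,\hess u_\bb(x_i)=\Delta u_\bb(x_i)=-f(u_\bb(x_i))\le0$ immediately excludes nondegenerate minima and forces any index-$+1$ point to be a maximum; the exclusion of saddles (index $-1$) is obtained from the single-arc structure of the $Z_a$ and the maximum principle for $\sigma_a$, following \cite{cc}. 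With all indices equal to $+1$ the Poincaré–Hopf sum leaves exactly one critical point; nondegeneracy there follows because at a degenerate zero $\hess u_\bb\,a$ would vanish for some $a$, making that point a singular zero of $\sigma_a$ and producing a nodal configuration ruled out by the maximum principle.
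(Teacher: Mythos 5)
Your structural preliminaries (positivity, $\partial_\nu u_\bb<0$ on $\partial\Om$, critical points interior, the stability argument excluding nodal domains of $v_a=a\cdot\nabla u_\bb$ with boundary inside $\Om$) are correct and match the paper's Lemma 2.4 and Remark 2.8, and your observation that a degenerate critical point would create a singular zero of some $v_a$ with at least four nodal branches is also the right mechanism. However, the core of your argument has a genuine gap: it is intrinsically a large-$\beta$ perturbation of the Dirichlet case, while the theorem asserts uniqueness for \emph{every} $\bb>\bb_0$. Your key inequality $|\partial_\tangvet u_\bb|<\bb u_\bb$ on $\partial\Om$ is obtained by letting $\bb\to\infty$ and invoking Hopf's lemma for the Dirichlet limit $u_\infty$; this only yields the inequality for $\bb$ beyond some large, unspecified threshold $\bb_1$, and nothing in the hypotheses (an $L^\infty$ bound) prevents the tangential derivative from dominating $\bb u_\bb$ at some boundary points when $\bb$ is moderate. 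The paper explicitly warns against exactly this reading (see the remark after Theorem~\ref{MainThm}: the result ``is not obtained by perturbing the corresponding Dirichlet problem''). The paper closes the gap for all $\bb>\bb_0$ by a different mechanism: first it proves $M_\theta^\bb=\emptyset$ for \emph{every} $\bb>\bb_0$ (Lemma~\ref{lemma:Mtheta}), via a contradiction that counts intersections of $N_\theta^\bb$ with $\partial\Om$ ($\ge4$ at a hypothetical singular point versus $=2$ in the Dirichlet limit) and rules out the disappearance of a nodal component as $\bb$ runs from $\tilde\bb$ to $+\infty$, using delicate boundary expansions of the Robin condition together with $\kappa>0$ (Lemma~\ref{lemma:MthetaBD}, Corollary~\ref{rmk:tgder}); then it runs a degree-theoretic continuation on $I=\{\bb>\bb_0: u_\bb \text{ has}\ge2\text{ critical points}\}$, showing that a nondegenerate saddle would persist past $\sup I$ (if finite) or survive into the Dirichlet limit (if infinite), contradicting either the definition of $\sup I$ or Cabr\'e--Chanillo's uniqueness.

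A secondary, but still real, flaw: even granting the gradient confinement to the half-plane $\{X\cdot\nu<0\}$, the step ``winding number $1$ implies $v_a$ changes sign exactly twice on $\partial\Om$'' does not follow. Winding number $1$ gives \emph{at least} two sign changes, but the tangential component may oscillate and produce extra pairs of boundary zeros; your subsequent nodal-arc and saddle-exclusion arguments need the exact count. Repairing this requires $\mathcal C^2$-closeness to $u_\infty$ plus nondegeneracy of the limiting boundary zeros (where $\partial_\tangvet(a\cdot\nu)=\kappa\,(a\cdot\tangvet)\neq0$ since $\kappa>0$) --- which again only works for $\bb$ large. (A small slip as well: the monotonicity of the Dirichlet eigenvalue in your parenthetical runs the wrong way; the correct chain is $0=\lambda_D(L,D)>\lambda_D(L,\Om)\ge0$.) In summary, your proposal would prove the statement ``there exists $\bb_1\ge\bb_0$ such that for all $\bb>\bb_1$ the critical point is unique,'' which is strictly weaker than Theorem~\ref{MainThm}; to obtain the full statement you would need something like the paper's nondegeneracy-for-all-$\bb$ lemma and the continuation argument on $I$.
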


\begin{rmk}
We would like to emphasize that the previous result is not obtained by perturbing the corresponding Dirichlet problem when $\beta\to\infty$. On the contrary, it holds for every positive $\beta$ as soon as~\ref{Linf} holds.
\end{rmk}

Next we state as a corollary some very important cases where the preceding theorem applies.
Indeed, under the assumptions stated below, it is possible to show that for all $\bb_0>0$ there exists  $C:=C(\bb_0)>0$ such that
\[
\norm{u_\bb}_{L^\infty(\Omega)}\le C,
\]
that is assumption~\ref{Linf} in Theorem~\ref{MainThm} is satisfied.

\begin{cor}
\label{MainThm2}
Let $\Omega\subseteq\R^2$ be a smooth and bounded domain such that the curvature $\kappa$ of its boundary is strictly positive, i.e. $\kappa>0$ on $\partial\Omega$. Assume $f$ satisfies one of the following
\begin{enumerate}[(i)]
\item $f(t)=\lambda_\bb t$ where $\lambda_\bb:=\lambda_\bb(\Omega)$ is the first Robin eigenvalue in $\Omega$;
\item $f(t)\equiv1$;
\item $f(t)=\lambda g(t)$ where $g$ is a smooth function that satisfies $g(0)>0$, $g'(t)>0$, for all $t\ge0$, and $\lambda\in(0,\lambda_D^*)$, where $\lambda_D^*:=\lambda_D^*(\Omega)>0$ is a constant depending only on the domain $\Omega$.
\end{enumerate}
If (i) or (ii) hold true, then the solution $u_\bb$ of problem~\hyperlink{PB}{$(\text{P}_\bb)$} has a unique critical point, which is a nondegenerate maximum, for all $\bb>0$.
If (iii) is satisfied the same conclusion holds provided $u_\bb$ is a minimal solution (see Theorem~\ref{ThmSTAB}). 
\end{cor}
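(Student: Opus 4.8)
The plan is to deduce the corollary directly from Theorem~\ref{MainThm}: since $\Om$ and the curvature hypothesis are the same, in each of the three cases it suffices to check that $u_\bb$ is a positive stable solution of~\hyperlink{PB}{$(\text{P}_\bb)$} with $f\ge0$ on the range of $u_\bb$, and that the a priori bound~\ref{Linf} holds for every $\bb_0>0$ and all $\bb>\bb_0$. The location and nature of the critical points are unchanged if $u_\bb$ is multiplied by a positive constant, so in the linear case~(i) I am free to fix a convenient normalization. The only substantial point is thus the verification of~\ref{Linf}, and the unifying device will be the monotonicity of $\bb\mapsto u_\bb$, obtained from the comparison principle for the Robin problem (i.e.\ for $-\Delta$ with boundary condition $\partial_\nu u+\bb u=0$), which is available precisely because $\bb>0$.

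For case~(i), $u_\bb$ is a first Robin eigenfunction; I would take it positive and normalized by $\norm{u_\bb}_{L^\infty(\Om)}=1$, so that~\ref{Linf} holds trivially with $C=1$ and $f(t)=\la_\bb t\ge0$ on the range of $u_\bb$. Stability is automatic, because $\la_\bb$ is the bottom of the spectrum and hence is given by the Rayleigh quotient: for every $\phi\in H^1(\Om)$,
\[
\int_{\Om}|\nabla\phi|^2\,dx+\bb\int_{\partial\Om}\phi^2\,d\sigma\ge\la_\bb\int_{\Om}\phi^2\,dx,
\]
which is exactly the inequality of Definition~\ref{def:stab} since here $f'(u_\bb)=\la_\bb$. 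For case~(ii), $f\equiv1$ gives $f'\equiv0$, so stability reduces to $\int_\Om|\nabla\phi|^2\,dx+\bb\int_{\partial\Om}\phi^2\,d\sigma\ge0$, valid for all $\phi$, while positivity of $u_\bb$ follows from the maximum principle. To obtain~\ref{Linf} I would show $\bb\mapsto u_\bb$ is nonincreasing: for $\bb_1<\bb_2$ the difference $v:=u_{\bb_1}-u_{\bb_2}$ is harmonic and satisfies $\partial_\nu v+\bb_1 v=(\bb_2-\bb_1)u_{\bb_2}\ge0$ on $\partial\Om$, so $v\ge0$ by the Robin maximum principle (applying the Hopf lemma at a hypothetical negative boundary minimum). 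Hence $\norm{u_\bb}_{L^\infty(\Om)}\le\norm{u_{\bb_0}}_{L^\infty(\Om)}=:C(\bb_0)$ for all $\bb>\bb_0$, Theorem~\ref{MainThm} applies, and since $\bb_0>0$ is arbitrary the conclusion holds for all $\bb>0$.

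Case~(iii) is the delicate one and rests on Theorem~\ref{ThmSTAB}, which for $\la\in(0,\la_D^*)$ provides a minimal solution $u_\bb$ that is stable; here $f=\la g$ with $g>0$, so $f\ge0$ and $u_\bb>0$. The same monotonicity in $\bb$ will hold, now via the sub- and supersolution method: for $\bb_1<\bb_2$ the minimal solution $u_{\bb_1}$ is a supersolution of~\hyperlink{PB}{$(\text{P}_{\bb_2})$}, because $\partial_\nu u_{\bb_1}+\bb_2 u_{\bb_1}=(\bb_2-\bb_1)u_{\bb_1}\ge0$, whereas $0$ is a subsolution since $g(0)>0$; the monotonicity of $g$ then yields a solution in $[0,u_{\bb_1}]$, and minimality forces $u_{\bb_2}\le u_{\bb_1}$. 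As before this gives $\norm{u_\bb}_{L^\infty(\Om)}\le\norm{u_{\bb_0}}_{L^\infty(\Om)}=:C(\bb_0)$ for $\bb>\bb_0$, and Theorem~\ref{MainThm} delivers the conclusion on the range of $\bb$ for which the minimal branch exists.

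The step I expect to be the main obstacle is in case~(iii): one must guarantee that the minimal solution is genuinely bounded at the base value $\bb_0$, so that $C(\bb_0)<\infty$, and that minimality is compatible with the monotone comparison above; this is exactly what must be extracted from Theorem~\ref{ThmSTAB}, the role of the threshold $\la_D^*$ being to keep the minimal branch away from blow-up. By contrast, the remaining ingredients---the Rayleigh-quotient stability in~(i), the trivial stability in~(ii), and the Robin comparison principle underlying the monotonicity in~(ii) and~(iii)---are standard once the monotonicity mechanism is in place.
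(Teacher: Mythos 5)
Your cases (i) and (ii) are correct. In (i) you argue exactly as the paper does (normalize $\norm{u_\bb}_{L^\infty(\Omega)}=1$, get stability from the Rayleigh quotient); the one point you omit is that here $f(t)=\lambda_\bb t$ depends on $\bb$, so Theorem~\ref{MainThm} does not apply verbatim to the family $\{u_\bb\}$: one must remark, as the paper does, that $\lambda_\bb\le\lambda_D$ keeps $f(u_\bb)=\lambda_\bb u_\bb$ uniformly bounded in $\bb$, so that the \emph{proof} of Theorem~\ref{MainThm} (in particular Theorem~\ref{ConvThm}, whose limit equation becomes the Dirichlet eigenvalue problem) carries over. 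In (ii) you take a genuinely different route: the paper verifies~\ref{Linf} by quoting the explicit bound of van den Berg--Bucur together with the asymptotics of $\lambda_\bb$ as $\bb\to0$ and $\bb\to+\infty$, whereas you deduce the monotonicity $u_{\bb_2}\le u_{\bb_1}$ for $\bb_1<\bb_2$ from the Robin comparison principle: $v=u_{\bb_1}-u_{\bb_2}$ is harmonic, $\partial_\nu v+\bb_1 v=(\bb_2-\bb_1)u_{\bb_2}\ge0$, and Hopf's lemma at a hypothetical negative boundary minimum (plus the constant case) gives $v\ge0$. This is correct, self-contained and more elementary; the paper's citation buys a quantitative bound, but nothing beyond the qualitative bound~\ref{Linf} is ever used.

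Case (iii) is where the genuine gap lies, and you located it correctly but did not close it --- and, in fact, neither does the paper. Your comparison step is literally the paper's proof of Theorem~\ref{ThmSTAB}: for $\bb_1\le\bb_2$, the solution $u_{\bb_1}^\lambda$ is an upper solution of $(\text{P}_{\bb_2}^\lambda)$, so the minimal solution at $\bb_2$ exists and satisfies $u_{\bb_2}^\lambda\le u_{\bb_1}^\lambda$. But note the direction this argument really gives: a solution at $\bb_1$ is an upper solution only for problems with \emph{larger} $\bb$ (and for the Dirichlet problem, since it is positive on $\partial\Omega$), so existence propagates \emph{upward} in $\bb$, yielding $\lambda_{\bb_1}^*\le\lambda_{\bb_2}^*\le\lambda_D^*$. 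The paper asserts the reversed chain $\lambda_D^*\le\lambda_{\bb_2}^*\le\lambda_{\bb_1}^*$, which is exactly what would be needed to guarantee that the minimal branch exists at every base value $\bb_0>0$ for every $\lambda\in(0,\lambda_D^*)$; that reversed chain does not follow from the comparison (the Dirichlet minimal solution is a \emph{lower} solution of the Robin problem, so it cannot cap a monotone scheme from above), and it is false in general. For instance, take $g(t)=e^t$ on the unit disk $B_1$: if any solution exists at $(\lambda,\bb)$, the minimal one exists, is radial and superharmonic, and integrating the equation gives $2\pi\bb\,m=\lambda\int_{B_1}e^{u}\,dx\ge\lambda\pi e^{m}$ with $m:=u|_{\partial B_1}>0$, hence $\lambda\le 2\bb\,m e^{-m}\le 2\bb/e$; so $\lambda_\bb^*\le 2\bb/e<\lambda_D^*$ once $\bb$ is small, and for $\lambda\in(2\bb/e,\lambda_D^*)$ there is simply no minimal solution.

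Consequently, your hedged conclusion --- uniqueness ``on the range of $\bb$ for which the minimal branch exists'' --- is what your argument actually proves, and it is the defensible statement: fixing $\bb>0$ and $\lambda\in(0,\lambda_\bb^*)$, the family $\{u_{\bb'}^\lambda:\bb'\ge\bb\}$ exists, is stable and uniformly bounded by $\norm{u_\bb^\lambda}_{L^\infty(\Omega)}$, and the proof of Theorem~\ref{MainThm} run on this family gives the claim at $\bb$. The stronger claim of the corollary and of Theorem~\ref{ThmSTAB} --- all $\bb>0$ with $\lambda\in(0,\lambda_D^*)$ --- is not reachable by this mechanism: the obstacle you flagged is not a technicality to be extracted from Theorem~\ref{ThmSTAB}, but a sign error in that theorem's proof, and the correct formulation requires $\lambda<\lambda_\bb^*$ in place of $\lambda<\lambda_D^*$.
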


\begin{rmk}
\begin{enumerate}[(i)]
\item We recall that in the case of the Robin torsion function the result is not new, since it follows from~\cite[Theorem 2]{Sak90}.
\item Typical examples of $f$ satisfying (iii) are the model nonlinearities $f(t)=\lambda e^t$ or $f(t)=\lambda (1+t)^p$, $p>0$ and $\lambda>0$. We refer to Section~\ref{s3} for more details.
\end{enumerate}
\end{rmk}

\begin{rmk}
As can be deduced from Corollary~\ref{MainThm2}, it is possible to see that uniqueness of the critical point can be verified for any value of $\bb>0$: 
indeed in all of the cases (i)-(iii) in Corollary~\ref{MainThm2}, the assumption~\ref{Linf} can be verified for all $\bb_0>0$, see Section~\ref{S3} for the details.

Moreover, note that, if for instance $f\equiv1$, \ref{Linf} holds for all $\bb_0>0$, but it is easy to see that that $C(\bb_0)\to+\infty$ as $\bb_0\to0$ as shown in~\cite[Theorem 1]{vdBB14}.
\end{rmk}

\begin{rmk}
\label{noNeum}
We would like to point out that even in the cases where uniqueness of the critical point holds for any $\bb>0$, it is not possible to extend the result to the case of the Neumann boundary condition
\[
\partial_\nu u=0,\quad\text{on }\partial\Omega,
\]
that is $\bb=0$ in~\hyperlink{PB}{$(\text{P}_\bb)$}. Indeed it is a famous result of Casten and Holland~\cite{CH78} and Matano~\cite{Mat79}  that, in this case, non constant stable solutions do not exist in convex domains.
\end{rmk}

\begin{rmk}
Finally, we remark that the claim of Corollary~\ref{MainThm2}, and in turn the one of Theorem~\ref{MainThm}, is no longer true as soon as the domain is no more convex, even if it is very close to be. Indeed, in this case it is possible to find solutions with an arbitrary large number of critical points for $\bb$ large. We refer to Remarks~\ref{rmkS1} and~\ref{rmkS2} in Section~\ref{S3} for further details. It remains to treat the case of general convex domains where the curvature could vanish somewhere.
\end{rmk}

The key point in the proof of Theorem~\ref{MainThm} consists in proving that for any $\bb>\bb_0$ all the critical points of $u_\bb$ are nondegenerate, see Corollary~\ref{CorCP} and then by mean of a topological degree argument it is not hard to conclude that $u_\bb$ has the same number of critical points of a solution of the corresponding Dirichlet problem, which, from the previous discussion, it is known to be exactly one.\\
 Finally, to prove Corollary~\ref{MainThm2} we show that assumption~\ref{Linf} can be verified for all $\bb_0>0$ and then, as a consequence, it is possible to desume the uniqueness of the critical point for all $\bb>0$.

The paper is organized as follows: in the next section we state and prove the convergence result to the solution of the corresponding Dirichlet problem, see Theorem~\ref{ConvThm}, and then we prove Theorem~\ref{MainThm}. In Section~\ref{S3} we deal with cases (i)-(iii) of Corollary~\ref{MainThm2}.

\section{Proof of the main result}
\label{S2}

In this section we prove Theorem~\ref{MainThm}. To this end we firstly show that for $\bb\to+\infty$ the solutions $u_\bb$ converge to the one of the corresponding Dirichlet problem. This is stated in the following theorem.

\begin{thm}
\label{ConvThm}
Under the assumptions of Theorem~\ref{MainThm}, for $\bb\to+\infty$ one has
\[
u_\bb\to u_D,\quad\text{in }\mathcal C^{2,\alpha}(\overline\Omega),
\]
for some $\alpha\in(0,1)$ and where $u_D$ is a positive and stable solution of the associated Dirichlet problem
\[
\hypertarget{PD}{(\text{P}_D)\qquad}
\begin{cases}
-\Delta u_D=f(u_D)&\text{in }\Omega\\
u_D=0&\text{on }\partial\Omega.
\end{cases}
\]
\end{thm}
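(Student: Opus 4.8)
The plan is to combine uniform a priori bounds with the elementary observation that the Robin condition $\partial_\nu u_\bb+\bb u_\bb=0$ forces the boundary trace of $u_\bb$ to be of order $1/\bb$, so that as $\bb\to+\infty$ the boundary condition degenerates into the Dirichlet one. First I would record the elementary integral bounds. Testing the equation with $u_\bb$ and integrating by parts gives
\[
\int_\Om\abs{\nabla u_\bb}^2\,dx+\bb\int_{\partial\Om}u_\bb^2\,d\sigma=\int_\Om f(u_\bb)u_\bb\,dx,
\]
and since the right-hand side is bounded uniformly in $\bb$ by \eqref{Linf} (recall $f$ is smooth, hence bounded on the common range $[0,\norm{u_\bb}_{L^\infty}]$ of the $u_\bb$), this yields a uniform $H^1(\Om)$ bound together with $\norm{u_\bb}_{L^2(\partial\Om)}^2\le C/\bb\to0$. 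In particular $M:=\sup_\bb\norm{f(u_\bb)}_{L^\infty(\Om)}<+\infty$.

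The key quantitative step is a barrier argument bounding the boundary values. Since $f\ge0$ one has $-\Delta u_\bb=f(u_\bb)\le M$ and $u_\bb\ge0$ by the minimum principle, as at a boundary minimum $x_0$ one has $\partial_\nu u_\bb(x_0)\le0$, whence $\bb u_\bb(x_0)=-\partial_\nu u_\bb(x_0)\ge0$. Let $\psi_0$ solve the Dirichlet torsion problem $-\Delta\psi_0=M$ in $\Om$, $\psi_0=0$ on $\partial\Om$, and set $a_0:=\norm{\partial_\nu\psi_0}_{L^\infty(\partial\Om)}$, a finite constant depending only on $\Om$ and $M$ (by smoothness of $\partial\Om$). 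Then $\Phi:=\psi_0+a_0/\bb$ is a supersolution: $-\Delta\Phi=M\ge f(u_\bb)$, while on $\partial\Om$ one has $\partial_\nu\Phi+\bb\Phi=\partial_\nu\psi_0+a_0\ge0$. Comparing $u_\bb$ with $\Phi$ through the maximum principle for the Robin operator gives $u_\bb\le\Phi$, hence
\[
\norm{u_\bb}_{L^\infty(\partial\Om)}\le\frac{a_0}{\bb},\qquad\text{so that}\qquad\abs{\partial_\nu u_\bb}=\bb\, u_\bb\le a_0\quad\text{on }\partial\Om .
\]

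Next I would upgrade these to uniform $\mathcal{C}^{2,\alpha}(\overline\Om)$ bounds. Interior Schauder estimates already give uniform $\mathcal{C}^{2,\alpha}$ bounds on compact subsets, using only $\norm{u_\bb}_{\mathcal{C}^0}\le C$ and $\norm{f(u_\bb)}_{\mathcal{C}^\alpha}\le C$ (the latter by bootstrapping, since $f$ is smooth). Near $\partial\Om$ I would apply the Schauder theory for the oblique derivative problem after rescaling the boundary condition as $\tfrac1\bb\partial_\nu u_\bb+u_\bb=0$: for $\bb\ge1$ the coefficients of this rescaled boundary operator are uniformly bounded in $\mathcal{C}^{1,\alpha}$, and the operator satisfies the complementing condition uniformly, tending to the Dirichlet condition $u=0$ as $\bb\to+\infty$. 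This is the step I expect to be the main obstacle: a naive application of the oblique estimate produces constants growing like $\bb$ (the zeroth order coefficient is $\bb$), and the whole point is that the rescaled problem is a regular perturbation of the Dirichlet problem, so its Schauder constants remain bounded uniformly in $\bb$. Together with the interior bounds this gives $\norm{u_\bb}_{\mathcal{C}^{2,\alpha}(\overline\Om)}\le C$.

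Finally I would pass to the limit. By the Arzel\`a--Ascoli theorem, along any sequence $\bb_n\to+\infty$ a subsequence converges in $\mathcal{C}^2(\overline\Om)$ to some $u_D$; passing to the limit in the equation yields $-\Delta u_D=f(u_D)$ in $\Om$, while the uniform convergence together with $\norm{u_\bb}_{L^\infty(\partial\Om)}\to0$ forces $u_D=0$ on $\partial\Om$. Nonnegativity passes to the limit, and since $f\ge0$ is not identically zero the strong maximum principle gives $u_D>0$ in $\Om$. Stability also passes to the limit: for every fixed $\phi\in H^1_0(\Om)\subseteq H^1(\Om)$ the boundary term in Definition~\ref{def:stab} vanishes, and since $f'(u_\bb)\to f'(u_D)$ uniformly one obtains $\int_\Om\abs{\nabla\phi}^2\,dx-\int_\Om f'(u_D)\phi^2\,dx\ge0$, i.e. $u_D$ is a positive stable solution of~\hyperlink{PD}{$(\text{P}_D)$}. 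For the application to Theorem~\ref{MainThm} a subsequential statement already suffices, and convergence of the whole family follows once the limiting stable Dirichlet solution is identified uniquely under the present hypotheses.
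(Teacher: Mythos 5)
Your proof skeleton coincides with the paper's (energy estimate, then a uniform $\mathcal C^{2,\alpha}(\overline\Omega)$ bound, then Ascoli--Arzel\`a and identification of the limit, with positivity and stability handled exactly as in the paper), and your barrier argument is correct and even gives something the paper does not state explicitly: $u_\bb\le a_0/\bb$ on $\partial\Omega$, hence $\abs{\partial_\nu u_\bb}=\bb u_\bb\le a_0$ there. However, there is a genuine gap precisely at the step you yourself flag as ``the main obstacle'': the uniform-in-$\bb$ Schauder estimate up to the boundary. This cannot be dispatched by declaring the rescaled problem ``a regular perturbation of the Dirichlet problem''. In the classical Schauder theory for oblique derivative problems the constant depends both on the norms of the boundary coefficients and on the obliqueness constant $\min_{\partial\Omega}\beta\cdot\nu$ of the boundary operator $\beta\cdot\nabla+\gamma$. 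For the operator $\partial_\nu+\bb$ the zeroth-order coefficient $\gamma=\bb$ blows up; for your rescaled operator $\bb^{-1}\partial_\nu+1$ the obliqueness constant equals $1/\bb\to0$, which is exactly the degenerate regime excluded by that theory (the boundary condition changes type, from first order to zeroth order, in the limit). So no off-the-shelf estimate yields constants uniform in $\bb$; this uniformity is itself a nontrivial theorem. The paper fills exactly this step by invoking the a priori estimate of Crasta and Fragal\`a,~\cite[Theorem 2.2]{CF21}, which is devoted to uniform-in-$\bb$ regularity for Robin problems; without that result (or an equivalent compactness/contradiction argument) your chain of implications breaks here.

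Note also that your barrier bound, nice as it is, does not repair this step. Knowing that $\partial_\nu u_\bb=-\bb u_\bb$ is bounded in $L^\infty(\partial\Omega)$ lets you regard $u_\bb$ as a solution of a Neumann problem with uniformly bounded data, but Schauder $\mathcal C^{2,\alpha}$ estimates for the Neumann problem require the boundary datum in $\mathcal C^{1,\alpha}(\partial\Omega)$, i.e.\ control of the tangential derivatives of $\bb u_\bb$ on $\partial\Omega$ --- which is (a factor $\bb$ worse than) the very bound you are trying to establish, so the argument is circular. Everything downstream of the uniform bound (convergence of a subsequence, passage to the limit in the equation, recovery of the Dirichlet condition from $\norm{u_\bb}_{L^\infty(\partial\Omega)}\to0$, positivity by the strong maximum principle, stability by testing with functions in $H^1_0(\Omega)$) is correct and matches the paper's argument; and the fact that your convergence is only subsequential is not a defect, since the paper's own proof also extracts a subsequence.
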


The proof of this result can be deduced as the one of~\cite[Theorem 2.1]{CF21}, up to small modifications. For the sake of completeness we report it here below.

\begin{rmk}
As can be easily deduced from the proof, the theorem holds in a more general setting: indeed the convexity assumption on $\Omega$ does not play any role and it is possible to prove it in any dimension.
\end{rmk}

The rest of the section is organized as follows: we start by proving Theorem~\ref{ConvThm} and giving some preliminary result, finally in Subsection~\ref{S2s2} we are able to prove Theorem~\ref{MainThm}.

\subsection{Preliminary results}

Let us start with the proof of Theorem~\ref{ConvThm}.

\begin{proof}[Proof of Theorem~\ref{ConvThm}]
For all $\bb>\bb_0$ one has
\begin{align*}
\int_\Omega \abs{\nabla u_\bb}^2\,dx+\int_\Omega \abs{u_\bb}^2\,dx&\le\int_\Omega \abs{\nabla u_\bb}^2\,dx+\int_{\partial\Omega} \abs{u_\bb}^2\,d\sigma+\int_\Omega \abs{u_\bb}^2\,dx\\
	&=\int_\Omega f(u_\bb)u_\bb\,dx+\int_\Omega \abs{u_\bb}^2\,dx\\
	&\le C_{\bb_0}\max_{[0,C_{\bb_0}]}\abs{f}\abs{\Omega}+C_{\bb_0}^2\abs{\Omega}.
\end{align*}
Hence, we can apply~\cite[Theorem 2.2]{CF21} and in turn the classical Sobolev embedding theorems to obtain
\begin{equation}
\label{eq:AA}
\norm{u_\bb}_{\mathcal C^{2,\alpha}(\overline\Omega)}\le C,
\end{equation}
for some $C:=C(\bb_0)>0$ and $\alpha\in(0,1)$. Eventually up to consider a smaller $\alpha$ and to pass to a suitable subsequence, Ascoli-Arzelà Theorem gives
\[
u_\bb\to u,\quad\text{in }\mathcal C^{2,\alpha}(\overline\Omega)\text{ as }\bb\to+\infty.
\]
It is then trivial to see that $u$ satisfies the equation $-\Delta u=f(u)$ and that for all $x\in\partial\Omega$
\[
u(x)=\lim_{\bb\to+\infty}u_\bb(x)=\lim_{\bb\to+\infty}-\frac{\partial_\nu u_\bb(x)}\bb=0.
\]
To conclude the proof it is enough to observe that $u>0$ in $\Omega$ by the Maximum Principle and that for all $\phi\in\mathcal C^{\infty}_0(\Omega)$ one has
\[
\int_{\Omega}|\nabla\phi|^{2}\,dx=\int_{\Omega}|\nabla\phi|^{2}\,dx+\bb\int_{\partial\Omega}\phi^2\,d\sigma\ge\int_{\Omega} f'(u_\bb)|\phi|^{2}\,dx,\quad\forall\bb>\bb_0.
\]
Then the stability of $u$ is proved once we pass to the limit in the preceding equation.
\end{proof}

Estimate~\ref{eq:AA} in the proof of Theorem~\ref{ConvThm} will be useful in the sequel so that we write it here as a different proposition.

\begin{prop}
\label{prop}
Under the assumptions of Theorem~\ref{MainThm} one has
\[
\norm{u_\bb}_{\mathcal C^{2,\alpha}(\Omega)}\le C,\quad\text{for all }\bb>\bb_0,
\]
for some $\alpha\in(0,1)$ and some $C:=C(\bb_0)>0$.
\end{prop}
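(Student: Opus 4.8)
The plan is to observe that this estimate has, in essence, already been established: it is precisely inequality~\ref{eq:AA} obtained midway through the proof of Theorem~\ref{ConvThm}. Since the statement is now asserted uniformly in $\bb$ and not merely along the subsequence extracted there, I would reproduce that derivation cleanly, tracking at each step that every constant depends only on $\bb_0$ (through the bound $C_{\bb_0}$ furnished by~\ref{Linf}) and not on the individual value $\bb>\bb_0$.

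First I would extract a uniform energy bound. Testing the equation $-\Delta u_\bb=f(u_\bb)$ against $u_\bb$ and integrating by parts, the Robin condition $\partial_\nu u_\bb=-\bb u_\bb$ turns the boundary contribution into $\bb\int_{\partial\Omega}u_\bb^2\,d\sigma\ge0$, which may be discarded, whence
\[
\int_\Omega\abs{\nabla u_\bb}^2\,dx\le\int_\Omega f(u_\bb)u_\bb\,dx.
\]
Because~\ref{Linf} confines $u_\bb$ to the fixed compact interval $[0,C_{\bb_0}]$, on which the smooth function $f$ is bounded, the right-hand side is controlled by $C_{\bb_0}\max_{[0,C_{\bb_0}]}\abs{f}\,\abs{\Omega}$; combining this with the trivial $L^2$ bound $\int_\Omega u_\bb^2\,dx\le C_{\bb_0}^2\abs{\Omega}$ yields a bound on $\norm{u_\bb}_{H^1(\Omega)}$ that is independent of $\bb$.

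Then I would upgrade this low regularity to $\mathcal C^{2,\alpha}$. The decisive ingredient is the elliptic regularity estimate for Robin problems of~\cite[Theorem 2.2]{CF21}, whose constants do not degenerate as $\bb\to+\infty$; feeding in the $H^1$ and $L^\infty$ control just obtained and then invoking the classical Sobolev and Schauder embeddings produces the claimed bound $\norm{u_\bb}_{\mathcal C^{2,\alpha}(\overline\Omega)}\le C$ with $C=C(\bb_0)$ and some $\alpha\in(0,1)$.

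The only genuine difficulty is the uniformity in $\bb$: applied naively, elliptic regularity for the Robin problem carries constants that blow up as the zeroth-order boundary coefficient $\bb$ tends to infinity, so one cannot simply invoke it separately for each fixed $\bb$. This is exactly the obstruction resolved by~\cite[Theorem 2.2]{CF21}, and the whole argument reduces to checking that its hypotheses are met by the data bound coming from~\ref{Linf}. Everything else is the routine bookkeeping of the estimate already recorded inside the proof of Theorem~\ref{ConvThm}.
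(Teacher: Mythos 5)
Your proposal is correct and follows essentially the same route as the paper: the paper proves Proposition~\ref{prop} precisely by pointing to estimate~\ref{eq:AA}, which is obtained inside the proof of Theorem~\ref{ConvThm} by testing the equation with $u_\bb$, using the Robin condition and the $L^\infty$ bound~\ref{Linf} to get a $\bb$-independent $H^1$ bound, and then invoking the uniform-in-$\bb$ elliptic estimate of~\cite[Theorem 2.2]{CF21} together with Sobolev embeddings. Your handling of the boundary term (discarding $\bb\int_{\partial\Omega}u_\bb^2\,d\sigma\ge0$) is if anything cleaner than the paper's display, and your emphasis on the non-degeneration of the constants as $\bb\to+\infty$ is exactly the point of citing~\cite{CF21}.
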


The next lemma is nothing else than maximum principle and Hopf's boundary point lemma. The proof can be found for instance in~\cite{Sak90}, but we report it here for completeness.

\begin{lemma}
\label{LemmaMP}
Let $\Omega\subseteq\R^N$ be a smooth and bounded domain and let $u\in\mathcal C^2(\overline\Omega)$ be a solution of
\[
\begin{cases}
-\Delta u=f(u)&\text{in }\Omega\\
\partial_\nu u+\bb u=0&\text{on }\partial\Omega,
\end{cases}
\]
for some smooth $f:\R\to[0,+\infty)$, $f\not\equiv0$ and $\beta>0$. Then $u>0$ in $\overline\Omega$ and $\partial_\nu u<0$ on $\partial\Omega$.
\end{lemma}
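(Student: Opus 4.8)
The plan is to combine the weak and strong minimum principles for superharmonic functions with Hopf's boundary point lemma, using the Robin condition to force an incompatible sign at the boundary. Since $f\ge0$, the equation $-\Delta u=f(u)$ gives $\Delta u\le0$ in $\Omega$, so $u$ is superharmonic; by the weak minimum principle its minimum $m:=\min_{\overline\Omega}u$ is attained at some $x_0\in\partial\Omega$. The whole lemma will then follow once we show $m>0$.

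First I would argue by contradiction and suppose $m\le0$. If $u$ is non-constant, the strong minimum principle yields $u(x)>m$ for every $x\in\Omega$; since $\Omega$ is smooth it satisfies the interior ball condition at $x_0$, so the minimum version of Hopf's lemma gives the strict inequality $\partial_\nu u(x_0)<0$ for the outer normal derivative. On the other hand the boundary condition reads $\partial_\nu u(x_0)=-\bb u(x_0)=-\bb m\ge0$, because $\bb>0$ and $m\le0$; this contradicts the Hopf inequality. The remaining possibility is that $u$ is constant, $u\equiv m$, in which case $\partial_\nu u\equiv0$ on $\partial\Omega$ and the boundary condition forces $\bb m=0$, i.e.\ $m=0$ and $u\equiv0$. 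Discarding this trivial solution (the case of interest being $u\not\equiv0$), we conclude $m>0$, hence $u\ge m>0$ on $\overline\Omega$.

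Once positivity is established, the second assertion is immediate: the boundary condition gives $\partial_\nu u=-\bb u<0$ on $\partial\Omega$, since $\bb>0$ and $u>0$ there.

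The only delicate point is the sign-chasing in the Hopf step: one must apply Hopf's lemma with the correct orientation of the normal so that the Robin identity $\partial_\nu u(x_0)=-\bb m$ produces a sign genuinely incompatible with the strict Hopf inequality. The smoothness of $\partial\Omega$ (hence the interior ball condition) together with $\bb>0$ are exactly what make the two signs clash, while the degenerate constant solution $u\equiv0$ must be disposed of separately, as above.
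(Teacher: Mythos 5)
Your proof is correct and follows essentially the same route as the paper's: since $f\ge0$ the function $u$ is superharmonic, its minimum lies on $\partial\Omega$, Hopf's lemma forces $\partial_\nu u<0$ at that point, and this sign is incompatible with the Robin condition $\partial_\nu u=-\beta u$ unless the minimum is strictly positive. The only difference is that you explicitly dispose of the constant solution $u\equiv0$ (which can occur when $f(0)=0$), a degenerate case the paper's proof passes over silently, so your write-up is, if anything, slightly more careful.
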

\begin{proof}
Let $x\in\overline\Omega$ be such that 
\[
u(x)=\min_{\overline\Omega} u.
\]
By the Maximum Principle we have that $x\in\partial\Omega$ and in particular $\partial_\nu u(x)<0$. Hence, if we assume by contradiction that $u(x)\le0$ we get
\[
\partial_\nu u(x)+\bb u(x)<0,
\]
which is in contrast with the Robin boundary condition. Then $u>0$ in $\overline\Omega$ and 
\[
\partial_\nu u=-\bb u<0,\quad\text{on }\partial\Omega.\qedhere
\]
\end{proof}

\begin{lemma}
\label{LemmaDeg}
Under the same assumptions of Lemma~\ref{LemmaMP}, if the critical points of $u$ are isolated, we have
\[
\deg(\Omega,\nabla u,\orig)=1,
\]
where $\deg$ stays for the topological degree of a vector field.
\end{lemma}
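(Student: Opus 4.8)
The plan is to compute the degree purely from the behaviour of $\nabla u$ on $\partial\Omega$, exploiting the fact that Lemma~\ref{LemmaMP} forces the gradient to point strictly inward along the boundary. First I would observe that $\deg(\Omega,\nabla u,\orig)$ is well defined: by Lemma~\ref{LemmaMP} one has $\partial_\nu u=\nabla u\cdot\nu<0$ on $\partial\Omega$, so in particular $\nabla u\ne\orig$ at every boundary point and $\orig$ is not attained by $\nabla u$ on $\partial\Omega$. (The hypothesis that the critical points are isolated is not actually needed to evaluate the degree; it serves only to interpret $\deg(\Omega,\nabla u,\orig)$ afterwards as the finite sum $\sum_i\mathrm{ind}(\nabla u,x_i)$ over the critical points.)

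Next, since the Brouwer degree of a vector field over a domain depends only on the homotopy class of its restriction to $\partial\Omega$ as a map into $\R^2\setminus\{\orig\}$, I would homotope $\nabla u|_{\partial\Omega}$ to the inward unit normal field $-\nu$ through $H_t:=(1-t)\nabla u+t(-\nu)$, $t\in[0,1]$. The key verification is that this homotopy never vanishes on $\partial\Omega$: its normal component is
\[
H_t\cdot\nu=(1-t)\partial_\nu u-t,
\]
which is strictly negative for every $t\in[0,1]$, since $\partial_\nu u<0$ and the two terms cannot vanish simultaneously. Hence $H_t\ne\orig$ on $\partial\Omega$, and homotopy invariance gives that $\deg(\Omega,\nabla u,\orig)$ equals the winding number of the inward normal field $-\nu$ along $\partial\Omega$.

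Finally I would evaluate this winding number. Because $\Omega\subseteq\R^2$ is a smooth, bounded and simply connected domain, the Gauss map $\partial\Omega\to S^1$ associated with the outer normal $\nu$ has winding number $+1$; passing from $\nu$ to $-\nu$ amounts to a global rotation by $\pi$, which leaves the winding number unchanged. Therefore $\deg(\Omega,\nabla u,\orig)=1$, as claimed. Equivalently, this is precisely the Poincar\'e--Hopf count recalled in the introduction: for a field pointing inward on the boundary of a contractible planar domain the sum of the indices equals $(-1)^2\chi(\Omega)=1$.

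The argument is entirely elementary, and the only step requiring a little care is the sign bookkeeping guaranteeing the non-vanishing of the homotopy $H_t$, together with the orientation convention that makes the normal winding number equal to $+1$ rather than $-1$ (it is here that the restriction to the planar case $N=2$ enters, since the antipodal map on $S^{N-1}$ has degree $(-1)^N$).
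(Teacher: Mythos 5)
Your proposal is correct and rests on exactly the same two ingredients as the paper's own proof: the inward-pointing condition $\nabla u\cdot\nu=\partial_\nu u<0$ on $\partial\Omega$ supplied by Lemma~\ref{LemmaMP}, and the simple connectedness of $\Omega$ (so $\chi(\Omega)=1$). The only difference is packaging: the paper concludes in one line by citing the Poincar\'e--Hopf theorem, whereas you prove the needed planar instance of it by hand --- the homotopy $H_t=(1-t)\nabla u+t(-\nu)$, whose normal component $(1-t)\partial_\nu u-t$ stays strictly negative, followed by the Umlaufsatz giving winding number $+1$ for the normal field --- which is a legitimate, self-contained substitute that makes explicit where $N=2$ and $\chi(\Omega)=1$ enter.
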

\begin{proof}
Since $\nabla u_\beta\cdot\nu=\partial_\nu u_\bb<0$ on $\partial\Omega$ we can apply the Poincaré-Hopf Theorem - see for instance~\cite{Mbook} - to the vector field $\nabla u_\bb$ and the claim follows being $\Omega$ simply connected.
\end{proof}

From now up to the end of Section~\ref{S2}, let us assume that the hypothesis of Theorem~\ref{MainThm} are satisfied. In particular $\Omega$ is a smooth and bounded domain such that the curvature $\kappa$ of its boundary is strictly positive, i.e. $\kappa>0$ on $\partial\Omega$ and there exists $\bb_0>0$ such that for all $\bb>\bb_0$, problem~\hyperlink{PB}{$(\text{P}_\bb)$} admits a stable solution $u_\bb$ that satisfies~\ref{Linf}, that is $\norm{u_\bb}_{L^\infty(\Omega)}\le C$, for some $C:=C(\bb_0)>0$.

As in~\cite{cc}, we introduce the following notation: for every $\theta\in[0,2\pi)$ we write $\vec{e_{\theta}}=(\cos\theta,\sin\theta)\in\mathbb S^1$ and we set
\begin{align*}
\partial_\theta u_\bb&:=\frac{\partial u_\bb}{\partial \vec{e_{\theta}}}=\cos\theta\, \partial_{x_1}u_\bb+\sin\theta\, \partial_{x_2}u_\bb\\
N_{\theta}^\bb&:=\overline{\set{x\in{\Omega}|\partial_\theta u_\bb(x)=0}},\\
M_{\theta}^\bb&:=\set{x\in N_{\theta}|\nabla (\partial_\theta u_\bb(x))=\orig},
\end{align*}
where $u_\bb$ is always a stable solution of~\hyperlink{PB}{$(\text{P}_\bb)$}.

\begin{rmk}
\label{rmk:stab:Dir}
As in the Dirichlet case (see~\cite{cc}), for all $\theta\in[0,2\pi)$ there is no nonempty domain $H\subseteq\Omega$ such that $\partial H\subseteq N_{\theta}^{\bb}$ (where the boundary of $H$ is considered as a subset of $\R^{2}$).
To show it, let us denote by $\lambda_D(-\Delta+q(x),\mathcal U)$ the first eigenvalue of the operator $-\Delta+q(x)$ in a domain $\mathcal U$. Then, if it is not the case the function $\phi:=\partial_\theta u_\bb\chi_H$ satisfies
\[
\int_\Omega\abs{\nabla\phi}^2\,dx-\int_\Omega f'(u_\bb)\phi^2\,dx=0,
\]
and then by the monotonicity of the first Dirichlet eigenvalue with respect to domains inclusion
\begin{align*}
0&\ge\lambda_D(-\Delta-f'(u_\bb),H)\\
	&>\lambda_D(-\Delta-f'(u_\bb),\Omega)\\
	&\ge\inf_{\substack{\psi\in H^1(\Omega)\\ \norm{\psi}_{L^2(\Omega)=1}}}\int_\Omega\abs{\nabla\psi}^2\,dx+\beta\int_{\partial\Omega}\psi^2\,d\sigma-\int_\Omega f'(u_\bb)\psi^2\,dx\ge0,
\end{align*}
a contradiction, where the last inequality follows from the stability of $u_\bb$, see Definition~\ref{def:stab}.
\end{rmk}

Now, we want to prove that all the critical points of $u_\bb$ are nondegenerate. This will be the consequence of the stronger property
\[
M_\theta^\bb\cap\Omega=\emptyset,\quad\text{for all }\theta\in[0,2\pi).
\]
We firstly show that $M_\theta^\bb$ has no point on $\partial\Omega$ and in a second time we show that it is empty also in the interior the domain. Here we exploit the fact that $\kappa>0$ on $\partial\Omega$.

\begin{lemma}
\label{lemma:MthetaBD}
For all $\theta\in[0,2\pi)$, it holds
\[
M_\theta^\bb\cap\partial\Omega=\emptyset.
\]
\end{lemma}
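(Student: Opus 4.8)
The plan is to set $v:=\partial_\theta u_\bb$ and to exploit that, differentiating the equation in the fixed direction $\vec{e_\theta}$, $v$ solves the linearized problem $-\Delta v=f'(u_\bb)\,v$ in $\Omega$; by Proposition~\ref{prop} and elliptic regularity $v\in\mathcal C^{2,\alpha}(\overline\Omega)$. I would fix a point $x_0\in\partial\Omega$ and work in the orthonormal frame $(\tau,\nu)$ given by the unit tangent and the outer unit normal, with the Frenet relations $\partial_\tau\nu=\kappa\tau$ and $\partial_\tau\tau=-\kappa\nu$. Writing $\vec{e_\theta}=a\,\nu+b\,\tau$ with $a:=\vec{e_\theta}\cdot\nu$, $b:=\vec{e_\theta}\cdot\tau$, $a^2+b^2=1$, I first record two boundary identities. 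Differentiating the Robin condition $\partial_\nu u_\bb+\bb u_\bb=0$ tangentially gives
\[
\tau^\top\hess(u_\bb)\,\nu=-(\kappa+\bb)\,\partial_\tau u_\bb\qquad\text{on }\partial\Omega,
\]
while the equation itself yields $\tau^\top\hess(u_\bb)\,\tau=-f(u_\bb)-\nu^\top\hess(u_\bb)\,\nu$. Together with $\partial_\nu u_\bb=-\bb u_\bb$ (Lemma~\ref{LemmaMP}), these express $v$, $\partial_\tau v$ and $\partial_\nu v$ on $\partial\Omega$ entirely through $a,b$, $\partial_\tau u_\bb$ and $u_{\nu\nu}:=\nu^\top\hess(u_\bb)\,\nu$.

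Arguing by contradiction, suppose $x_0\in M_\theta^\bb\cap\partial\Omega$, so that $v(x_0)=0$, $\partial_\tau v(x_0)=0$ and $\partial_\nu v(x_0)=0$. Expanding through the frame, these three scalar conditions become
\begin{align*}
b\,\partial_\tau u_\bb&=a\,\bb u_\bb,\\
a(\kappa+\bb)\,\partial_\tau u_\bb+b\bigl(f(u_\bb)+u_{\nu\nu}\bigr)&=0,\\
a\,u_{\nu\nu}&=b(\kappa+\bb)\,\partial_\tau u_\bb.
\end{align*}
If $a\neq0$, the first and third identities give $u_{\nu\nu}=(\kappa+\bb)\bb u_\bb$, and substituting into the second while using $b\,\partial_\tau u_\bb=a\bb u_\bb$ collapses everything to $(\kappa+\bb)\bb u_\bb+b^2 f(u_\bb)=0$. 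This is impossible, since $\kappa>0$, $\bb>0$, $u_\bb>0$ and $f\ge0$; this is precisely where strict positivity of the curvature enters. Hence necessarily $a=0$, i.e. $\vec{e_\theta}$ is tangent to $\partial\Omega$ at $x_0$, and the system forces $\partial_\tau u_\bb(x_0)=0$ and $u_{\nu\nu}(x_0)=-f(u_\bb(x_0))$; in particular $\hess(u_\bb)(x_0)$ is degenerate with kernel spanned by $\tau$.

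The tangential case $a=0$ is the crux, since the first-order information above is consistent and does not by itself close the argument. Here I would differentiate the tangential Robin identity once more along $\partial\Omega$ and use the Frenet relations to compute the full Hessian of $v$ at $x_0$. Since $\Delta v(x_0)=-f'(u_\bb)\,v(x_0)=0$, this Hessian is trace-free, and a direct computation shows that its off-diagonal entry in the $(\tau,\nu)$ frame equals
\[
-\kappa\bigl(f(u_\bb(x_0))+(\kappa+\bb)\bb\,u_\bb(x_0)\bigr)\neq0,
\]
the positivity of the curvature entering decisively a second time. Consequently $\hess(v)(x_0)\neq\orig$, so $v$ vanishes at $x_0$ to order exactly two, and its nodal set near $x_0$ consists of two arcs crossing transversally (indeed orthogonally, by trace-freeness). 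I would then close the case by following these arcs and invoking Remark~\ref{rmk:stab:Dir}: the two interior branches produced by such a degenerate boundary zero cannot be completed without enclosing a region $H\subseteq\Omega$ with $\partial H\subseteq N_\theta^\bb$, which stability forbids.

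I expect this last, partly global, step in the tangential case to be the main obstacle, whereas the generic case $a\neq0$ reduces to the clean positivity computation above. An alternative, which I would try first, is to push the local analysis far enough that the sign of $-\kappa\bigl(f(u_\bb)+(\kappa+\bb)\bb u_\bb\bigr)$ alone excludes the admissible nodal configuration near the strictly convex boundary, thereby keeping the proof of Lemma~\ref{lemma:MthetaBD} entirely local and deferring all use of stability to the interior statement $M_\theta^\bb\cap\Omega=\emptyset$.
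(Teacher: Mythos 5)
Your non-tangential case $a\neq0$ is correct and is essentially the paper's Step 1 written in the frame $(\tau,\nu)$ instead of a graph parametrization of $\partial\Omega$: both reduce to a sign identity all of whose terms are nonnegative and one strictly positive. Two small remarks: when $b=0$ you cannot use the second identity multiplied by $b$ to conclude, but then the first identity alone gives $\bb u_\bb(x_0)=0$, a contradiction; and only $\kappa+\bb>0$ is used here, so convexity $\kappa\ge0$ suffices for this case (as the paper observes in a footnote) --- strict positivity of the curvature is genuinely needed only in the tangential case.

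The tangential case $a=0$ is where the proposal has a genuine gap. First, $\hess(v)(x_0)\neq\orig$ is not by itself a contradiction: membership in $M_\theta^\bb$ constrains only $v(x_0)$ and $\nabla v(x_0)$, so the value you compute for the mixed second derivative of $v$ is a \emph{consequence} of the boundary condition, not an obstruction; in particular your ``entirely local'' alternative cannot work at this order. Second, the global closing you propose does not follow from Remark~\ref{rmk:stab:Dir}: the two nodal branches entering $\Omega$ from $x_0$ may simply terminate at two distinct points of $\partial\Omega$, and any region they bound together with arcs of $\partial\Omega$ has part of its boundary on $\partial\Omega$, where $\partial_\theta u_\bb\neq0$; such a region is not a domain $H\subseteq\Omega$ with $\partial H\subseteq N_\theta^\bb$, so stability forbids nothing. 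Excluding that configuration is precisely the hard global step, which in the paper (for interior singular points) requires the continuation-in-$\bb$ and nodal-counting argument of Lemma~\ref{lemma:Mtheta}, comparing with the Dirichlet picture $\sharp\set{\partial\Omega\cap N_\theta^D}=2$ as $\bb\to+\infty$; it cannot be replaced by a one-line appeal to stability.

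One further point of comparison you should be aware of. The paper's Step 2 claims a purely local contradiction by combining the $\tau^2$-coefficient of the Robin identity \ref{eq:bdd:par}, which produces \ref{gksd}, with the $\tau$-coefficient of its $\tau$-derivative. Your value $\partial^3_{x_1x_1x_2}u_\bb(\orig)=\kappa\bigl(f(u_\bb(\orig))+\bb(\kappa+\bb)u_\bb(\orig)\bigr)$ differs from \ref{gksd} exactly by the term $\bb(\phi''(0))^2u_\bb(\orig)$, which is the contribution of $A(\tau)=1+\tfrac12(\phi''(0))^2\tau^2+\mathcal O(\tau^3)$ to the quadratic coefficient, and it is your value that is consistent with the differentiated identity: the $\tau$-coefficient of the derivative of an identity is twice the $\tau^2$-coefficient of the identity itself, so the two relations cannot be independent. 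With that term restored, the two relations the paper combines coincide and the claimed contradiction $\bb u(\orig)(\phi''(0))^2=0$ disappears. In other words, your (correct) computation indicates that the third-order local expansion alone cannot close the tangential case; this supports your own assessment that this step is the main obstacle, but it also means that neither your proposal as written, nor a purely local argument of this type, proves the lemma.
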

Before providing the proof of the lemma let us recall that the corresponding property in the Dirichlet case simply comes from the fact that $\kappa>0$ on $\partial\Omega$ and Hopf's Lemma. Indeed, as explained with more details in~\cite{cc}, one has that $\partial_{\theta}u_D(x)=0$ for some $x\in\partial\Omega$ if and only if $\tangvet(x)=\pm\theta$ (here and in the following $\tangvet(x)$ denotes the unit tangent vector to $\partial\Omega$ in $x$) and then it is easy to see that, since $\kappa>0$ on $\partial\Omega$ and $\partial_{\nu}u_D(x)<0$ by Hopf's Lemma, one has
\[
\partial^2_{\theta\theta}u_D(x)=\partial^2_{\tangvet\tangvet}u_D(x)=\kappa(x)\partial_{\nu}u_D(x)<0,
\]
proving that $\nabla (\partial_{\theta}u_{D})\not=\orig$.

\begin{proof}[Proof of Lemma~\ref{lemma:MthetaBD}]
Since the proof does not depend on $\bb$ we omit to write it. To prove the claim, we need to divide the proof into two steps.\\
\\
\emph{Step 1: Assume there exists $x\in M_\theta\cap\partial\Omega$ for some $\theta\in[0,2\pi)$, then the tangent vector to $\partial\Omega$ in $x$ is parallel to $\theta$.}\\
Without loss of generality, we can assume $x=\orig$ and $\theta=(1,0)$. Being $x=\orig\in N_{\theta}^\bb$ it follows that $u_{x_1}(\orig)=0$. Lemma~\ref{LemmaMP} tells us that $\partial_\nu u(\orig)<0$ and then the second component of $\nu(\orig)$ must be different from zero. In particular, up to a rotation, we can assume $\nu(\orig)\cdot(0,1)<0$.  Finally, $\orig\in M_{\theta}^\bb$ means $u_{x_1x_1}(\orig)=u_{x_1x_2}(\orig)=0$.

Now, since $\partial\Omega$ is of class $\mathcal C^2$, there exists a local parametrization $\phi\in\mathcal C^2(-2\eps,2\eps,\R)$ of $\partial\Omega$ around $\orig$ such that
\begin{equation}
\label{par:bdd}
\partial\Omega\cap B_\eps(\orig)=\{(\tau,\phi(\tau)):-\eps<\tau<\eps\}.
\end{equation}
Let us point out that the convexity of $\Omega$ implies $\phi''(0)\ge0$\footnote{since $\kappa>0$ on $\partial\Omega$ we have $\phi''(0)>0$, but the strict inequality is not needed here.}.
Then for all $\tau\in(-\eps,\eps)$ we can write
\[
\tangvet(\tau,\phi(\tau))=\frac{(1,\phi'(\tau))}{A(\tau)},\quad\text{and}\quad\nu(\tau,\phi(\tau))=\frac{(\phi'(\tau),-1)}{A(\tau)},
\]
with
\begin{equation}
\label{A(0)}
A(\tau):=(1+(\phi'(\tau))^2)^{1/2}\ge1.
\end{equation}
and where we recall that $\tangvet(x)$ and $\nu(x)$ respectively denote the tangent and the normal unit vector to the boundary of $\Omega$ in a given point $x\in\partial\Omega$.

Then it is enough to prove that $\phi'(0)=0$; indeed we recall that our claim is to prove that the tangent vector $\tangvet(\orig)$ to the boundary of $\Omega$ is parallel to $\theta=(1,0)$ which is equivalent to $\phi'(0)=0$. Assume instead $\phi'(0)\not=0$. The Robin boundary condition, multiplied by $A(\tau)$, can be rewritten as
\begin{equation}
\label{eq:bdd:par}
\begin{split}
0&=A(\tau)\partial_\nu u(\tau,\phi(\tau))+\beta A(\tau)u(\tau,\phi(\tau))\\
	&=u_{x_1}(\tau,\phi(\tau))\phi'(\tau)-u_{x_2}(\tau,\phi(\tau))+\beta A(\tau)u(\tau,\phi(\tau)).
\end{split}
\end{equation}
Deriving in $\tau$ the preceding relation, evaluating it for $\tau=0$ and recalling that $u_{x_1}(\orig)=u_{x_1x_1}(\orig)=u_{x_1x_2}(\orig)=0$, we get
\[
-u_{x_2x_2}(\orig)\phi'(0)+\bb A'(0)u(\orig)+\bb A(0)u_{x_2}(\orig)\phi'(0)=0.
\]
Now, taking into account that $A'(0)=(A(0))^{-1}\phi'(0)\phi''(0)$, $u_{x_2x_2}(\orig)=-f(u(\orig))<0$ and that $\phi'(0)\not=0$ one has
\[
\bb u_{x_2}(\orig)(A(0))^2+f(u(\orig))A(0)+\bb u(\orig)\phi''(0)=0.
\]
Finally, since $(A(0))^{-1}u_{x_2}(\orig)=-\partial_\nu u(\orig)>0$ and $\phi''(0)\ge0$ we deduce $A(0)\le0$, which is in contradiction with~\ref{A(0)}.\\
\\
\emph{Step 2: Let $x\in\partial\Omega$ be such that the tangential derivative with respect to $\partial\Omega$ of $u$ at $x$ is zero, i.e. $\partial_\tangvet u(x)=0$, then $x\not\in M_\theta$, where  $\vec{e_{\theta}}=(\cos\theta,\sin\theta)\in\mathbb S^1$ is parallel to the tangent vector to $\partial\Omega$ in $x$, for some $\theta\in[0,2\pi)$.}\\
We argue again by contradiction and, similarly to the preceding step, we can assume $x=\orig$, $\theta=\tangvet(\orig)=(1,0)$ and $\nu(\orig)=(0,-1)$. Moreover, we get that the local parametrization~\ref{par:bdd} of $\partial\Omega$ satisfies $\phi'(0)=0$ and $\phi''(0)>0$, having $\Omega$ strictly positive curvature of the boundary. Being $x\in M_\theta$, $u_{x_1}(\orig)=u_{x_1x_1}(\orig)=u_{x_1x_2}(\orig)=0$ and then
\begin{align*}
u(x_1,x_2)=&\,\,u(\orig)+u_{x_2}(\orig)x_2+\frac12u_{x_2x_2}(\orig)x_2^2+\\
	&+\frac16u_{x_1x_1x_1}(\orig)x_1^3+\frac12u_{x_1x_1x_2}(\orig)x_1^2x_2+\\
	&+\frac12u_{x_1x_2x_2}(\orig)x_1x_2^2+\frac16u_{x_2x_2x_2}(\orig)x_2^3+\mathcal O(\abs{x}^4).
\end{align*}
In particular, on the boundary $\partial\Omega$ one has $x_1=\tau$, $x_2=\phi(\tau)=\frac12\phi''(0)\tau^2+\mathcal O(\tau^3)$ and then
\begin{equation}
\label{dfgkjdfgk}
\begin{split}
u(\tau,\phi(\tau))&=u(\orig)+\frac12\phi''(0)u_{x_2}(\orig)\tau^2+\mathcal O(\tau^3),\\
u_{x_1}(\tau,\phi(\tau))&=\frac12u_{x_1x_1x_1}(\orig)\tau^2+\mathcal O(\tau^3),\\
u_{x_2}(\tau,\phi(\tau))&=u_{x_2}(\orig)+\frac12\left(\phi''(0)u_{x_2x_2}(\orig)+u_{x_1x_1x_2}(\orig)\right)\tau^2+\mathcal O(\tau^3),\\
u_{x_1x_1}(\tau,\phi(\tau))&=u_{x_1x_1x_1}(\orig)\tau+\mathcal O(\tau^2),\\
u_{x_1x_2}(\tau,\phi(\tau))&=u_{x_1x_1x_2}(\orig)\tau+\mathcal O(\tau^2),\\
u_{x_2x_2}(\tau,\phi(\tau))&=u_{x_2x_2}(\orig)+u_{x_1x_2x_2}(\orig)\tau+\mathcal O(\tau^2).
\end{split}
\end{equation}
Substituting in~\ref{eq:bdd:par} we obtain
\[
u_{x_2}(\orig)-\beta u(\orig)+\frac12\left(\phi''(0)u_{x_2x_2}(\orig)+u_{x_1x_1x_2}(\orig)-\beta\phi''(0)u_{x_2}(\orig)\right)\tau^2+\mathcal O(\tau^3)=0.
\]
In particular, observing that $u_{x_2}(\orig)-\beta u(\orig)=0$ by the Robin condition, it follows
\begin{equation}
\label{gksd}
u_{x_1x_1x_2}(\orig)=(\beta u_{x_2}(\orig)-u_{x_2x_2}(\orig))\phi''(0).
\end{equation}
Deriving~\ref{eq:bdd:par}  in $\tau$ and substituting~\ref{dfgkjdfgk} we obtain
\[
(-u_{x_1x_1x_2}(\orig)-u_{x_2x_2}(\orig)\phi''(0)+\beta u(\orig)(\phi''(0))^2+\beta u_{x_2}(\orig)\phi''(0))\tau+\mathcal O(\tau^2)=0,
\]
and with~\ref{gksd} we conclude $\beta u(\orig)(\phi''(0))^2=0$ which is a contradiction since $u>0$ on $\partial\Omega$ and $\phi''(0)>0$.
\end{proof}

As an easy consequence of the preceding lemma we observe that if for some point $x\in\partial\Omega$ the tangential derivative of $u_\bb$ is $0$, then the second tangential derivative must be different from zero.

\begin{cor}
\label{rmk:tgder}
If $x\in\partial\Omega$ satisfies $\partial_\tangvet u_\bb(x)=0$, one has
\[
\partial^2_{\tangvet\nu}u_\beta(x)=0,\quad\text{and}\quad\partial^2_{\tangvet\tangvet}u_\beta(x)\not=0.
\]
\end{cor}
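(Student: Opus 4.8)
The plan is to fix a boundary point $x\in\partial\Omega$ with $\partial_\tangvet u_\bb(x)=0$ and to choose $\theta\in[0,2\pi)$ so that $\vec{e_\theta}=\tangvet(x)$; then the two assertions can be proved independently. The vanishing of the mixed derivative $\partial^2_{\tangvet\nu}u_\bb(x)$ will come from differentiating the Robin condition a single time along $\partial\Omega$, while the non-degeneracy $\partial^2_{\tangvet\tangvet}u_\bb(x)\neq0$ will be read off from Lemma~\ref{lemma:MthetaBD} together with the first assertion.

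For the first assertion I would reuse the local set-up of the proof of Lemma~\ref{lemma:MthetaBD}: up to rotation and translation take $x=\orig$, $\tangvet(\orig)=(1,0)$ and $\nu(\orig)=(0,-1)$, with the parametrization~\ref{par:bdd} normalized so that $\phi'(0)=0$ and $\phi''(0)=\kappa(x)>0$. The hypothesis $\partial_\tangvet u_\bb(x)=0$ now reads $u_{x_1}(\orig)=0$. Differentiating the Robin condition in the form~\ref{eq:bdd:par} once in $\tau$ and evaluating at $\tau=0$, and using $\phi'(0)=0$, $A(0)=1$, $A'(0)=0$ together with $u_{x_1}(\orig)=0$, every term but one vanishes and one obtains $u_{x_1x_2}(\orig)=0$, i.e. $\partial^2_{\tangvet\nu}u_\bb(x)=0$. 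Invariantly, this is the relation $\partial^2_{\tangvet\nu}u_\bb+(\kappa+\beta)\,\partial_\tangvet u_\bb=0$ on $\partial\Omega$, obtained by differentiating $\partial_\nu u_\bb+\beta u_\bb=0$ tangentially and using the Frenet identity $\partial_\tangvet\nu=\kappa\,\tangvet$, evaluated at the point where $\partial_\tangvet u_\bb$ vanishes.

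For the second assertion I would note that, since $\vec{e_\theta}=\tangvet(x)$ is a fixed vector, $\nabla(\partial_\theta u_\bb)=\hess u_\bb\cdot\tangvet(x)$, so that in the orthonormal frame $\{\tangvet(x),\nu(x)\}$
\[
\nabla(\partial_\theta u_\bb)(x)=\partial^2_{\tangvet\tangvet}u_\bb(x)\,\tangvet(x)+\partial^2_{\tangvet\nu}u_\bb(x)\,\nu(x).
\]
Lemma~\ref{lemma:MthetaBD} guarantees that $x\notin M_\theta^\bb$, hence $\nabla(\partial_\theta u_\bb)(x)\neq\orig$; since the normal component vanishes by the first assertion, the tangential component must survive, giving $\partial^2_{\tangvet\tangvet}u_\bb(x)\neq0$.

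I expect the only delicate points to be bookkeeping ones: correctly identifying the directional derivatives $\partial^2_{\tangvet\tangvet}u_\bb$ and $\partial^2_{\tangvet\nu}u_\bb$ with the entries of $\hess u_\bb$ in the moving frame (and fixing the sign in $\partial_\tangvet\nu=\kappa\,\tangvet$), and making the appeal to Lemma~\ref{lemma:MthetaBD} rigorous. For the latter, the cleanest route is to observe that the computation in Step~2 of that lemma in fact rules out $\nabla(\partial_\theta u_\bb)(x)=\orig$ directly from the conditions $u_{x_1}(\orig)=u_{x_1x_1}(\orig)=u_{x_1x_2}(\orig)=0$, so one need not separately verify that $x$ lies in the closure $N_\theta^\bb$ before concluding.
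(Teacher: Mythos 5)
Your proposal is correct and takes essentially the same route as the paper: differentiate the Robin condition~\ref{eq:bdd:par} once along $\partial\Omega$ and evaluate at the point to obtain $\partial^2_{\tangvet\nu}u_\bb(x)=0$, then use Lemma~\ref{lemma:MthetaBD} to exclude the simultaneous vanishing of $\partial^2_{\tangvet\tangvet}u_\bb(x)$. Your closing observation --- that Step~2 of that lemma is a pointwise computation requiring only $u_{x_1}(\orig)=u_{x_1x_1}(\orig)=u_{x_1x_2}(\orig)=0$, so that one need not first check $x\in N_\theta^\bb$ --- is a valid and welcome tightening of a point the paper leaves implicit, since $M_\theta^\bb$ is by definition a subset of the closure of the \emph{interior} nodal set.
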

\begin{proof}
As in Step 2 of the proof of the preceding lemma, assume $x=\orig$, $\tangvet(\orig)=(1,0)$, $\nu(\orig)=(0,-1)$ and $\phi'(0)=0$ where $\phi$ is the local parametrization~\ref{par:bdd} of $\partial\Omega$. Then to obtain $\partial^2_{\tangvet\nu}u_\beta(x)=0$, it is enough to derive in $\tau$ equation~\ref{eq:bdd:par} and to evaluate what you get for $\tau=0$.

Finally, the fact that  $x\not\in M_\theta^\bb$ by Lemma~\ref{lemma:MthetaBD} implies that $\partial^2_{\tangvet\tangvet}u_\beta(x)\not=0$.
\end{proof}

We conclude this part by proving that the nodal lines of all the partial derivatives of $u_\bb$ do not contain singular points, i.e. $M_\theta^\bb$ is empty for all $\theta\in[0,2\pi)$.

\begin{lemma}
\label{lemma:Mtheta}
For all $\theta\in[0,2\pi)$, it holds
\[
M_\theta^\bb=\emptyset.
\]
\end{lemma}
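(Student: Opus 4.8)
The plan is to argue by contradiction, following the strategy of Cabr\'e and Chanillo~\cite{cc} but replacing the Dirichlet boundary analysis by the one developed above for the Robin condition. Set $w:=\partial_\theta u_\bb$ and suppose that there exists $x\in M_\theta^\bb$. By Lemma~\ref{lemma:MthetaBD} necessarily $x\in\Omega$, so it is enough to exclude interior singular points of the nodal set $N_\theta^\bb$. Differentiating $-\Delta u_\bb=f(u_\bb)$ along $\vec{e_\theta}$ shows that $w$ solves the linearized equation $-\Delta w=f'(u_\bb)w$ in $\Omega$, a linear elliptic equation with H\"older continuous coefficient by Proposition~\ref{prop}.

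First I would describe the local picture. Since $w$ solves $-\Delta w=f'(u_\bb)w$ with $w(x)=0$ and $\nabla w(x)=\orig$, the Hartman--Wintner/Bers structure theorem for nodal sets of solutions of linear elliptic equations yields that $w$ vanishes at $x$ to a finite order $k\ge 2$ and that, near $x$, the set $N_\theta^\bb$ is the union of exactly $2k\ge 4$ regular arcs issuing from $x$ at equal angles, with $w$ changing sign across each arc, so that the $2k$ sectors carry alternating signs. Away from such points the nodal set is a one dimensional manifold, and by compactness of $\overline\Omega$ together with the finiteness of the vanishing order, $N_\theta^\bb$ is a finite graph whose interior vertices are the points of $M_\theta^\bb\cap\Omega$, each of degree $\ge 4$, and whose remaining endpoints lie on $\partial\Omega$.

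Next I would bring in stability. By Remark~\ref{rmk:stab:Dir} there is no nonempty $H\subseteq\Omega$ with $\partial H\subseteq N_\theta^\bb$; equivalently, no nodal domain of $w$ is compactly contained in $\Omega$. Since $\Omega$ is simply connected, any cycle made of nodal arcs would bound a region $H\subseteq\Omega$ with $\partial H\subseteq N_\theta^\bb$, which is forbidden; hence $N_\theta^\bb$ contains no interior cycle and is a \emph{forest} all of whose leaves sit on $\partial\Omega$. Consequently the connected component of $N_\theta^\bb$ through $x$ is a tree carrying an interior vertex of degree $\ge 4$, and a tree with such a vertex has at least four leaves; thus it must reach $\partial\Omega$ in at least four distinct boundary points, each of which, by Lemma~\ref{lemma:MthetaBD}, is a regular point of the nodal set where a single arc arrives.

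The contradiction, and the main obstacle, is the boundary count: I must show that $N_\theta^\bb$ meets $\partial\Omega$ in at most two points, so that the tree above cannot exist and $M_\theta^\bb\cap\Omega=\emptyset$. This is where the Robin case genuinely departs from the Dirichlet one. Describing the boundary zeros of $w$ through the tangent angle $\psi$ of $\partial\Omega$, the Robin condition $\partial_\nu u_\bb=-\bb u_\bb$ gives on $\partial\Omega$ the identity $\partial_\theta u_\bb=-\bb u_\bb\sin(\psi-\theta)+\partial_\tangvet u_\bb\cos(\psi-\theta)$, so the boundary nodal points solve $\psi-\theta\equiv\arctan\!\big(\partial_\tangvet u_\bb/(\bb u_\bb)\big)\pmod\pi$. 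Since $\kappa>0$, the angle $\psi$ is strictly increasing along $\partial\Omega$ and turns by exactly $2\pi$; the plan is to show that, despite the extra tangential term, this equation has exactly two solutions and that each intersection is transversal, the key inputs being Lemma~\ref{LemmaMP} (so that $u_\bb>0$ and $\partial_\nu u_\bb<0$) and Corollary~\ref{rmk:tgder}, which gives the nondegeneracy of the tangential critical points of $u_\bb$ on $\partial\Omega$. In the Dirichlet case $\partial_\tangvet u_D\equiv 0$ and the count is immediate from the monotone rotation of the normal; here the hard part is exactly to dominate the tangential contribution $\partial_\tangvet u_\bb$ so as to preserve the two point count. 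Once this is achieved, a forest with an interior branch point is impossible, whence $M_\theta^\bb\cap\Omega=\emptyset$ and, together with Lemma~\ref{lemma:MthetaBD}, $M_\theta^\bb=\emptyset$.
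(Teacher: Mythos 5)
Your proposal tracks the paper's proof up to a point, and then diverges at exactly the place where it leaves a genuine gap. The common part: argue by contradiction, locate the singular point in the interior via Lemma~\ref{lemma:MthetaBD}, invoke the local structure theorem for nodal sets of $-\Delta w=f'(u_\bb)w$ (the paper cites Caffarelli--Friedman, you cite Hartman--Wintner/Bers; same content), and use stability through Remark~\ref{rmk:stab:Dir} to force the $2k\ge4$ nodal branches to reach $\partial\Omega$ at four or more distinct points. The gap is the step you yourself call ``the main obstacle'': you never prove that $N_\theta^\bb$ meets $\partial\Omega$ in at most two points \emph{at the fixed $\bb$ under consideration}; you only announce it as a plan. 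This is not a routine verification. On $\partial\Omega$ the zeros of $\partial_\theta u_\bb$ solve $\psi-\theta\equiv\arctan\big(\partial_\tangvet u_\bb/(\bb u_\bb)\big)\pmod\pi$, and to get ``exactly two solutions'' you would need the monotone rotation of $\psi$ (i.e.\ $\kappa>0$) to dominate the variation of the arctangent term along $\partial\Omega$; nothing in the hypotheses provides such a bound for an arbitrary $\bb>\bb_0$. A domination argument does work for $\bb$ large, since in the Dirichlet limit $\partial_\tangvet u_\bb\to0$ in $\mathcal C^1(\partial\Omega)$ while $\bb u_\bb=-\partial_\nu u_\bb\to-\partial_\nu u_D>0$, but that would prove the lemma only for large $\bb$, whereas the statement (and the main theorem, via Corollary~\ref{CorCP}) needs it for every $\bb>\bb_0$.

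The paper circumvents precisely this difficulty by a continuation argument in $\bb$ rather than a fixed-$\bb$ count. Starting from your same conclusion $\sharp\set{\partial\Omega\cap N_\theta^{\tilde\bb}}\ge4$, it lets $\bb$ run from $\tilde\bb$ to $+\infty$, where~\cite[Step 1 of the Proof of Theorem 1]{cc} gives $\sharp\set{\partial\Omega\cap N_\theta^D}=2$; by the $\mathcal C^{2,\alpha}$ compactness of Proposition~\ref{prop} and Theorem~\ref{ConvThm}, some component $\omega_\bb$ of $\{\partial_\theta u_\bb>0\}$ or $\{\partial_\theta u_\bb<0\}$ must then disappear at some $\bar\bb\le+\infty$. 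Collapse to an interior point is excluded by stability (Remark~\ref{rmk:stab:Dir}); collapse to a single boundary point or to a boundary arc is excluded, when $\bar\bb<+\infty$, by Taylor expansion of the Robin condition~\ref{eq:bdd:par} (using $\kappa>0$, $u_{\bar\bb}>0$ and Corollary~\ref{rmk:tgder}), and, when $\bar\bb=+\infty$, by the Dirichlet identity $\partial^2_{\tangvet\tangvet}u_D=\kappa\,\partial_{\nu}u_D<0$. To complete your proof you would essentially have to reproduce this deformation step: the two-point boundary count you are hoping for at a fixed finite $\bb$ is not established anywhere in the paper, and there is no evident way to obtain it directly.
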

\begin{proof}
By contradiction, let us assume that there exists $x_{\tilde\bb}\in M_\theta^{\tilde\bb}$, for some ${\tilde\bb}>\bb_0$ and for some $\theta\in[0,2\pi)$.  We know that $x_{\tilde\bb}$ lies in the interior of $\Omega$ by Lemma~\ref{lemma:MthetaBD} and taking into account that $\partial_{\theta}u_\bb$ satisfies
\[
-\Delta(\partial_{\theta}u_\bb)-f'(u_\bb)\partial_{\theta}u_\bb=0,\quad\text{in }\Omega,
\]
it is a famous result by Caffarelli and Friedman in~\cite{CF85b}, that $\partial_{\theta}u_{\tilde\bb}$ behaves as a homogeneous harmonic polynomial of degree $n\ge2$, in a neighbourhood of $x_{\tilde\bb}$. This means that in a neighbourhood of $x_{\tilde\bb}$ the nodal set $N_\theta^{\tilde\bb}$ consists of $n$ smooth curves intersecting transversally in $x_{\tilde\bb}$ and forming equal angles. By stability, there is no nonempty domain $H\subseteq\Omega$ such that $\partial H\subseteq N_{\theta}^\bb$ (where the boundary of $H$ is considered as a subset of $\R^{2}$), see Remark~\ref{rmk:stab:Dir}. It means that any of the $2n$ brunches of curve where $\partial_\theta u_{\tilde\bb}=0$ and starting from $x_{\tilde\bb}$ must intersect the boundary of $\Omega$ in a different point, that is
\[
\sharp\set{\partial\Omega\cap N_\theta^{\tilde\bb}}\ge4,
\]
here $\sharp$ denotes the counting measure.
Furthermore, we know from~\cite[Step 1 of the Proof of Theorem 1]{cc} that for $i=1,2$
\[
\sharp\set{\partial\Omega\cap N_\theta^D}=2,
\]
where $N_{\theta}^D:=\overline{\set{x\in{\Omega}|\partial_\theta u_D(x)=0}}$. Taking into account that $u_\bb$ varies smoothly thanks to Proposition~\ref{prop} with Ascoli-Arzelà Theorem and to Theorem~\ref{ConvThm}, we let $\bb$ move from $\tilde\bb$ to $+\infty$ and we infer that there exists a connected component of $\{\partial_\theta u_\bb>0\}$ or $\{\partial_\theta u_\bb<0\}$, let us call it $\omega_\bb$, that disappears; more precisely there exists $x_0\in\Omega$, or a single point $\bar x\in\partial\Omega$, or a connected component $\Gamma\subseteq\partial\Omega$,  such that for all $\eps>0$ one has $\overline\omega_\bb\subseteq B_\eps(x_0)$, or $\overline\omega_\bb\subseteq B_\eps(\bar x)$, or $\overline\omega_\bb\subseteq\{x\in\overline\Omega:\dist(x,\Gamma)<\eps\}$ if $\bb$ is close enough to $\bar\bb$.
Note that $\overline\omega_\bb\subseteq B_\eps(x_0)$ with $x_0\in\Omega$ is excluded by the stability of $u_\bb$: indeed if this case occurs it means that $\partial\omega_\bb\subseteq N_\theta^\bb\subseteq\Omega$, for some $\bb$ close to $\bar\bb$ and again this is not possible for Remark~\ref{rmk:stab:Dir}. 
With the same argument we can exclude that $\overline\omega_\bb\subseteq B_\eps(\bar x)$ and it intersects the boundary $\partial\Omega$ in exactly one point.
Moreover, note that of course $\bar x\in N_\theta^{\bar\bb}$ or $\Gamma\subseteq N_\theta^{\bar\bb}$.

Now, up to a rotation we can assume $\theta=0$ we divide the proof into the cases $\bar\bb<+\infty$ and $\bar\bb=+\infty$.\\

\emph{Case $\bar \beta<+\infty$.}
Up to a translation $\bar x=\orig$ or $\orig\in\Gamma$ and Lemma~\ref{LemmaMP} tells us that $\partial_\nu u_\bb(\orig)<0$ and then the second component of $\nu(\orig)$ must be different form zero. In particular, up to a rotation, we can assume $\nu(\orig)\cdot(0,1)<0$. Let $\phi$ the local parametrization of $\partial\Omega$ around $\orig$ as in~\ref{par:bdd}. Moreover, Lemma~\ref{lemma:MthetaBD} tell us that $\orig\not\in M_\theta^{\bar\bb}$ and then $\partial_{x_1}u_{\bar\bb}=\partial_\theta u_{\bar\bb}\not=0$ in $\Omega\cap B_r(\orig)$ for some $r>0$ small enough.

Now, if we are in the case $\omega_\bb$ reduces to the single point $\bar x=\orig$, for all $\bb<\bar\bb$ close enough to $\bar\bb$ we know that there exist $\tau_1^\bb,\tau_2^\bb\in(-\eps,\eps)$ with $\tau_i^\bb\to0$ as $\bb\to\bar\bb$ for all $i=1,2$ such that
\[
\partial_{x_1}u_{\bb}(\tau_1^\bb,\phi(\tau_1^\bb))=\partial_{x_1}u_{\bb}(\tau_2^\bb,\phi(\tau_2^\bb))=0.
\]
We claim that one has
\begin{equation}
\label{sign:der:tan}
\partial_\tangvet(\partial_{x_1}u_{\bb}(\tau_1^\bb,\phi(\tau_1^\bb)))\ge0,\quad\text{and}\quad\partial_\tangvet(\partial_{x_1}u_{\bb}(\tau_2^\bb,\phi(\tau_2^\bb)))\le0.
\end{equation}
Indeed, if $\partial_\tangvet(\partial_{x_1}u_{\bb}(\tau_i^\bb,\phi(\tau_i^\bb)))\not=0$, for all $i=1,2$, by the Implicit Function Theorem, $N_\theta^\bb$ intersects $\partial\Omega$ transversally, that is $\tau\mapsto\partial_{x_1}u_{\bb}(\tau,\phi(\tau))$ changes sign around $\tau_i^\bb$: in particular, for some $\delta>0$, $\partial_{x_1}u_{\bb}(\tau,\phi(\tau))<0$ for $\tau\in(\tau_1^\bb-\delta,\tau_1^\bb)$ and $\partial_{x_1}u_{\bb}(\tau,\phi(\tau))>0$ for $\tau\in(\tau_1^\bb,\tau_1^\bb+\delta)$ while $\partial_{x_1}u_{\bb}(\tau,\phi(\tau))>0$ for $\tau\in(\tau_2^\bb-\delta,\tau_2^\bb)$ and $\partial_{x_1}u_{\bb}(\tau,\phi(\tau))<0$ for $\tau\in(\tau_2^\bb,\tau_2^\bb+\delta)$, see Figure~\ref{fig:1}. It is then trivial to see that~\ref{sign:der:tan} is satisfied.

%
%
%
%
%
%
%
%
%
%
%

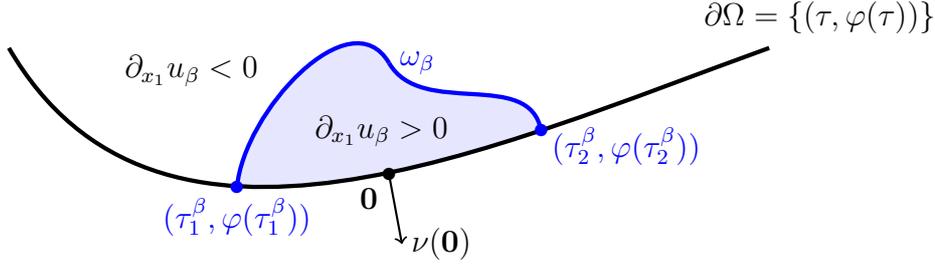
\begin{figure}
	\begin{tikzpicture}
	
	\coordinate (A1) at  (-5,2);
	\coordinate (A2) at  (-2,0.15);
	\coordinate (A3) at  (0,0.32);
	\coordinate (A4) at  (2,0.9);
	\coordinate (A5) at  (5,2);
	\coordinate (A6) at  (0,1.8);
	
	\shade[left color=blue!10,right color=blue!10] 
	(A2) to [out=85,in=120] (A6) to [out=300,in=100] (A4)
	(A4) to[out=195,in=358] (A2);

	\draw [thick,->] (A3) to (0.17,-0.6);
	\node at (0.15,-0.6)[right] {$\nu(\orig)$};
	
	\draw [ultra thick] (A1) to [out=300,in=200] (A5);
	\node at (4,2.4)[right] {$\partial\Omega=\{(\tau,\phi(\tau))\}$};
	\draw [ultra thick,blue] (A2) to [out=85,in=120] (A6);
	\draw [ultra thick,blue] (A6) to [out=300,in=100] (A4);

	\node at (A2)[blue] {$\bullet$};
	\node at (A2)[below,blue]{$(\tau_1^\bb,\phi(\tau_1^\bb))$};
	\node at (A3) {$\bullet$};
	\node at (0,0)[below, left] {$\orig$};
	\node at (A4)[blue] {$\bullet$};
	\node at (2,0.7)[below,right,blue] {$(\tau_2^\bb,\phi(\tau_2^\bb))$};

	\node at (A6)[right,blue] {$\omega_\bb$};
	\node at (-0.1,0.9) {$\partial_{x_1}u_\bb>0$};
	\node at (-2.6,1.7) {$\partial_{x_1}u_\bb<0$};

\end{tikzpicture}
\caption{\label{fig:1}The situation in the proof of the inequalities in~\ref{sign:der:tan}.}
\end{figure}

Hence, passing to the limit for $\bb\to\bar\bb$ in~\ref{sign:der:tan} we deduce
\begin{equation}
\label{eq:der:tan:x1}
0=\partial_\tangvet(\partial_{x_1}u_{\bar\bb}(\orig))=\frac{\partial^2_{x_1x_1}u_{\bar\bb}(\orig)+\partial^2_{x_1x_2}u_{\bar\bb}(\orig)\phi'(0)}{A(0)}.
\end{equation}
Note that if we are in the case $\omega_\bb$ reduces to $\Gamma$ the preceding equality is trivially satisfied being $\partial_\theta u_{\bar\bb}\equiv0$ on $\Gamma$ and $\orig\in\Gamma$.
We now distinguish between the case $\phi'(0)\not=0$ and $\phi'(0)=0$. Let $\phi'(0)\not=0$, then
\begin{align*}
u_{\bar\bb}(x_1,x_2)&=u_{\bar\bb}(\orig)+\partial_{x_2}u_{\bar\bb}(\orig)x_2\\
&\quad+\frac12\partial^2_{x_1x_1}u_{\bar\bb}(\orig)x_1^2+\partial^2_{x_1x_2}u_{\bar\bb}(\orig)x_1x_2+\frac12\partial^2_{x_2x_2}u_{\bar\bb}(\orig)x_2^2+\mathcal O(\abs{x}^3).
\end{align*}
In particular, on the boundary $\partial\Omega$ one has $x_1=\tau$, $x_2=\phi(\tau)=\phi'(0)\tau+\mathcal O(\tau^2)$ and then
\begin{equation}
\label{dfgkjdfgk2}
\begin{split}
u_{\bar\bb}(\tau,\phi(\tau))&=u_{\bar\bb}(\orig)+\phi'(0)\partial_{x_2}u_{\bar\bb}(\orig)\tau+\mathcal O(\tau^2),\\
\partial_{x_1}u_{\bar\bb}(\tau,\phi(\tau))&=\left(\partial^2_{x_1x_1}u_{\bar\bb}(\orig)+\phi'(0)\partial^2_{x_1x_2}u_{\bar\bb}(\orig)\right)\tau+\mathcal O(\tau^2),\\
\partial_{x_2}u_{\bar\bb}(\tau,\phi(\tau))&=\partial_{x_2}u_{\bar\bb}(\orig)+\left(\partial^2_{x_1x_2}u_{\bar\bb}(\orig)+\phi'(0)\partial^2_{x_2x_2}u_{\bar\bb}(\orig)\right)\tau+\mathcal O(\tau^2).
\end{split}
\end{equation}
Substituting in~\ref{eq:bdd:par} we obtain
\begin{align*}
0&=-\partial_{x_2}u_{\bar\bb}(\orig)+\bar\bb A(0) u_{\bar\bb}(\orig)+\phi'(0)\left(\partial^2_{x_1x_1}u_{\bar\bb}(\orig)+\phi'(0)\partial^2_{x_1x_2}u_{\bar\bb}(\orig)\right)\tau\\
&\quad+\left(-\partial^2_{x_1x_2}u_{\bar\bb}(\orig)-\phi'(0)\partial^2_{x_2x_2}u_{\bar\bb}(\orig)+\bar\bb A(0)\phi'(0)\partial_{x_2}u_{\bar\bb}(\orig)\right)\tau+\mathcal O(\tau^2),
\end{align*}
and taking into account~\ref{eq:der:tan:x1} we get
\begin{equation}
\label{eq:der:x_1:2}
\partial^2_{x_1x_2}u_{\bar\bb}(\orig)+\phi'(0)\partial^2_{x_2x_2}u_{\bar\bb}(\orig)-\bar\bb A(0)\phi'(0)\partial_{x_2}u_{\bar\bb}(\orig)=0.
\end{equation}
Finally, deriving~\ref{eq:bdd:par}, evaluating for $\tau=0$ and taking into account~\ref{eq:der:tan:x1} and~\ref{eq:der:x_1:2} we conclude
\[
\bar\bb\frac{\phi'(0)\phi''(0)}{A(0)}u_{\bar\bb}(\orig)=0,
\]
which is not possible since $\phi'(0)\not=0$ by assumption, $\phi''(0)>0,$ being $\kappa(\orig)>0$, and $u_{\bar\bb}(\orig)>0$.

The case $\phi'(0)=0$ is easier. First of all, observe that $\phi'(0)=0$ means that the tangent unit vector to $\partial\Omega$ in $\orig$ is $\tangvet(\orig)=(1,0)$ and then~\ref{eq:der:tan:x1} means $\partial^2_{x_1x_1}u_{\bar\bb}(\orig)=0$.  This means
\[
\partial_{\tangvet}u_{\bar\bb}(\orig)=\partial^2_{\tangvet\tangvet}u_{\bar\bb}(\orig)=0,
\]
but this is in contradiction with Corollary~\ref{rmk:tgder}.\\

\emph{Case $\bar \beta=+\infty$.}
Arguing as in the preceding case we get $\partial_\theta u_D(\bar x)=\partial_\tangvet(\partial_\theta u_D(\bar x))=0$. As explained in~\cite{cc}, the only possibility is $\theta=\pm\tangvet(\bar x)$ and then $\partial^2_{\tangvet\tangvet}u_D(x)=\kappa(x)\partial_{\nu}u_D(x)<0$, a contradiction.
\end{proof}

Thanks to this lemma we can immediately deduce that all the critical points of $u_\bb$ are nondegenerate.

\begin{cor}
\label{CorCP}
For all $x\in\Omega$ such that $\nabla u_\bb(x)=\orig$, one has
\[
\det\hess_{u_\bb}(x)\not=0.
\]
Moreover, all the critical points are isolated.
\end{cor}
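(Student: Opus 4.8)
The plan is to deduce the corollary directly from Lemma~\ref{lemma:Mtheta}, which is the substantive input; the corollary itself is then essentially a linear-algebra observation. First I would note that if $x\in\Omega$ satisfies $\nabla u_\bb(x)=\orig$, then in particular
\[
\partial_\theta u_\bb(x)=\cos\theta\,\partial_{x_1}u_\bb(x)+\sin\theta\,\partial_{x_2}u_\bb(x)=0
\]
for every $\theta\in[0,2\pi)$, so that $x\in N_\theta^\bb$ for all $\theta$ simultaneously.

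The key identity is that the gradient of $\partial_\theta u_\bb$ is obtained by applying the Hessian to the direction vector: since $\partial_\theta u_\bb=\nabla u_\bb\cdot\vec{e_{\theta}}$ and $\vec{e_{\theta}}$ is constant, a direct computation gives
\[
\nabla(\partial_\theta u_\bb)(x)=\hess_{u_\bb}(x)\,\vec{e_{\theta}}.
\]
Now suppose, for contradiction, that $\det\hess_{u_\bb}(x)=0$. Because $\hess_{u_\bb}(x)$ is a real $2\times2$ matrix, a vanishing determinant forces a nontrivial kernel; choosing a unit vector in it and writing it as $\vec{e_{\theta_0}}=(\cos\theta_0,\sin\theta_0)$ for a suitable $\theta_0\in[0,2\pi)$, we obtain $\nabla(\partial_{\theta_0}u_\bb)(x)=\hess_{u_\bb}(x)\vec{e_{\theta_0}}=\orig$. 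Combined with $\partial_{\theta_0}u_\bb(x)=0$, this says precisely that $x\in M_{\theta_0}^\bb$, contradicting Lemma~\ref{lemma:Mtheta}. Hence $\det\hess_{u_\bb}(x)\neq0$ at every critical point.

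Finally, the isolation of the critical points will follow from the nondegeneracy just established together with the regularity from Proposition~\ref{prop}: the map $\nabla u_\bb\colon\Omega\to\R^2$ is of class $\mathcal C^{1,\alpha}$, and at any critical point $x$ its Jacobian is exactly $\hess_{u_\bb}(x)$, which is invertible. By the Inverse Function Theorem $\nabla u_\bb$ is a local diffeomorphism near $x$, so $\orig$ has no other preimage in a neighbourhood of $x$; that is, $x$ is an isolated zero of $\nabla u_\bb$.

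I do not anticipate a serious obstacle here, since all the analytic difficulty has been absorbed into Lemma~\ref{lemma:Mtheta}. The only point requiring a moment of care is the passage from $\det\hess_{u_\bb}(x)=0$ to the existence of a genuine kernel direction $\vec{e_{\theta_0}}$: this is immediate in the plane, but it relies crucially on the two-dimensional setting, where every degenerate critical point produces a null direction that can be realized as one of the $\vec{e_{\theta}}$ used to define $M_\theta^\bb$.
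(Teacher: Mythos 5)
Your proposal is correct and is precisely the argument the paper intends: the corollary is stated as an immediate consequence of Lemma~\ref{lemma:Mtheta}, since a degenerate critical point would give a kernel direction $\vec{e_{\theta_0}}$ of the Hessian and hence a point of $M_{\theta_0}^\bb$, which the lemma excludes. Your additional justification of isolation via the Inverse Function Theorem applied to $\nabla u_\bb\in\mathcal C^{1,\alpha}$ is the standard completion of this deduction and matches the paper's (implicit) reasoning.
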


In the next lemma we show that if $u_\bb$ has more than a critical point, then it necessarily has at least a saddle point and viceversa.

\begin{lemma}
\label{LemmaIFF}
The solution $u_\bb$ has more than a critical point if and only if it has at least one nondegenerate saddle point.
\end{lemma}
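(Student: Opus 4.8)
The plan is to classify the critical points of $u_\bb$ by the index of the gradient field and then to read off the statement from the Poincar\'e--Hopf degree formula. First I would record the structural facts already available: by Corollary~\ref{CorCP} every critical point of $u_\bb$ is nondegenerate and isolated, and by Lemma~\ref{LemmaMP} one has $\partial_\nu u_\bb<0$ on $\partial\Omega$, so that $\nabla u_\bb\neq\orig$ on the boundary. Hence all critical points lie in the open set $\Omega$ and, being isolated in the compact $\overline\Omega$, they are finitely many. Since $\nabla u_\bb$ is a gradient field whose linearization at a critical point $x$ is exactly $\hess_{u_\bb}(x)$, nondegeneracy yields
\[
\mathrm{ind}(\nabla u_\bb,x)=\operatorname{sgn}\det\hess_{u_\bb}(x).
\]

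Next I would rule out interior local minima, which is where the sign condition $f\ge0$ enters. At a nondegenerate local minimum the Hessian is positive definite, so $\Delta u_\bb=\operatorname{tr}\hess_{u_\bb}>0$ there; but the equation gives $\Delta u_\bb=-f(u_\bb)\le0$, a contradiction. Therefore each critical point is either a nondegenerate maximum, where $\hess_{u_\bb}$ is negative definite and $\det\hess_{u_\bb}>0$, so the index equals $+1$, or a nondegenerate saddle, where $\det\hess_{u_\bb}<0$ and the index equals $-1$. Writing $m$ and $s$ for the number of maxima and saddles respectively, the degree computation of Lemma~\ref{LemmaDeg}, namely $\sum_x\mathrm{ind}(\nabla u_\bb,x)=\deg(\Omega,\nabla u_\bb,\orig)=1$, then becomes
\[
m-s=1.
\]

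From this identity the equivalence is immediate. If $u_\bb$ has more than one critical point then $m+s\ge2$, which together with $m-s=1$ forces $s\ge1$, i.e. at least one nondegenerate saddle; conversely, if $s\ge1$ then $m=s+1\ge2$, whence $m+s\ge3>1$ and there is more than one critical point. I do not expect a genuine obstacle here: the only step needing care is the exclusion of interior minima via the superharmonicity of $u_\bb$, and the bookkeeping with the degree formula, which rests on the nondegeneracy already secured in Corollary~\ref{CorCP}.
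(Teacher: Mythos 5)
Your proof is correct and follows the same route as the paper's: the paper's own (one-line) argument likewise excludes interior minima by the maximum principle and then combines Lemma~\ref{LemmaDeg} with Corollary~\ref{CorCP}, exactly the index bookkeeping $m-s=1$ you carry out explicitly. You have simply filled in the details the paper leaves implicit.
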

\begin{proof}
Since $u_\bb$ has no minima inside $\Omega$ by the Maximum Principle, the claim follows combining Lemma~\ref{LemmaDeg} and Corollary~\ref{CorCP}.
\end{proof}

\subsection{Proof of Theorem~\ref{MainThm}}
\label{S2s2}
We can finally prove Theorem~\ref{MainThm}.

\begin{proof}[Proof of Theorem~\ref{MainThm}]
By contradiction, we assume
\[
I:=\set{\bb>\bb_0:u_\bb\text{ has at least } 2 \text{ critical points}}\not=\emptyset,
\]
and then we set
\[
\bar\bb:=\sup I>\bb_0.
\]
We distinguish two cases. In the rest of the proof, convergences are to be intended up to a subsequence.\\

\emph{Case $\bar \beta<+\infty$.}
Take $(\bb_n)_n\subseteq I$ such that $\bb_n\to\bar\bb$ as $n\to+\infty$.
Thanks to Lemma~\ref{LemmaIFF} for all $n\in\N$ we can find $x_n\in\Omega$ such that
\[
\nabla u_{\bb_n}(x_n)=\orig,\quad\text{and}\quad\det\hess_{u_{\bb_n}}(x_n)<0.
\]
Letting $n$ goes to $+\infty$ clearly $x_n\to x_{\bar\bb}\in\overline\Omega$ and moreover Proposition~\ref{prop} with Ascoli-Arzelà Theorem imply that
\[
u_{\bb_n}\to u_{\bar\bb},\quad\text{in }\mathcal C^{2,\alpha}(\overline\Omega)\text{ as }n\to+\infty,
\]
for some $\alpha\in(0,1)$. Then, since the critical points are nondegenerate by Lemma~\ref{LemmaDeg} we have
\[
\nabla u_{\bar\bb}(x_{\bar\bb})=\orig,\quad\text{and}\quad\det\hess_{u_{\bar\bb}}(x_{\bar\bb})<0.
\]
Moreover, $x_{\bar\bb}\in\Omega$ by Lemma~\ref{LemmaMP}. This implies that we can find $r>0$ such that
\[
\deg(B_r(x_{\bar\bb}),\nabla u_{\bar\bb},\orig)=-1.
\]

Consider now another sequence $(\bb_k)_k\subseteq\R$ with $\bb_k>\bar\bb$ and $\bb_k\to\bar\bb$ as $k\to+\infty$. Again Proposition~\ref{prop}, together with Ascoli-Arzelà Theorem and the properties of the topological degree imply
\[
\deg(B_r(x_{\bar\beta}),\nabla u_{\bb_k},\orig)=-1,
\]
for all $k$ suitably large, where eventually $r$ can be smaller than before. This means that $u_{\bb_k}$ has at least one nondegenerate saddle point and in turn at least two critical points. Then $\bb_k\in I$, but this is clearly not possible since $\bb_k>\sup I$.\\

\emph{Case $\bar \beta=+\infty$.}
Take $(\bb_n)_n\subseteq I$ such that $\bb_n\to+\infty$ as $n\to+\infty$. Thanks to Theorem~\ref{ConvThm} and arguing as in the preceding case we can find $x\in\overline\Omega$ such that
\[
\nabla u_{D}(x)=\orig,\quad\text{and}\quad\det\hess_{u_{D}}(x)\le0,
\]
where $u_D$ is a positive and stable solution of the corresponding Dirichlet problem~\hyperlink{PD}{$(\text{P}_D)$}.
However this is not possible since we know that $u_D$ has exactly one nondegenerate critical point in $\overline\Omega$ and it is a maximum, see~\cite[Theorem 1]{cc}.
\end{proof}

\section{Particular cases: the proof of Corollary~\ref{MainThm2}}
\label{S3}

In this section we illustrate some important cases where it is possible to apply Theorem~\ref{MainThm}. In particular we cover all the cases (i)-(iii) of Corollary~\ref{MainThm2} providing its proof. To do it, we show that assumption~\ref{Linf} can be verified for all $\bb_0>0$.

Moreover, for $\bb$ large, we also exhibit counterexamples to the main results as soon as the convexity assumption on the domain is no longer satisfied. In particular we show that the result is false even for domains very close to convex ones where it is possible to find solutions with an arbitrarily large number of critical points.

\subsection{First eigenfunction}
\label{s2}
For all $\bb>0$ let $u_\bb$ be the first Robin eigenfunction, i.e. the positive solution of
\[
\begin{cases}
-\Delta u_\bb=\lambda_\bb u_\bb&\text{in }\Omega\\
\partial_\nu u_\bb+\bb u_\bb=0&\text{on }\partial\Omega,
\end{cases}
\]
normalized in such a way that $\norm{u_\bb}_{L^\infty(\Omega)}=1$. Here $\lambda_\bb:=\lambda_\bb(\Omega)$ is the first Robin eigenvalue in $\Omega$ which can be characterized as
\[
\lambda_\bb=\inf_{\substack{\psi\in H^1(\Omega)\\ \norm{\psi}_{L^2(\Omega)=1}}}\int_\Omega\abs{\nabla\psi}^2\,dx+\beta\int_{\partial\Omega}\psi^2\,d\sigma.
\]
Taking into account that $\lambda_\bb\to\lambda_D:=\lambda_1(-\Delta,\Omega)$, i.e. the first Dirichlet eigenvalue in $\Omega$, as $\bb\to+\infty$ we can apply Theorem~\ref{MainThm} to prove that $u_\bb$ has exactly one critical point for all $\bb>0$, that in particular is a nondegenerate maximum. To be precise, let us point out that in this case $f$ depends on $\beta$, but $f(u_\beta)=\lambda_\beta u_\beta$ is uniformly bounded in $\beta$ and then the proof of Theorem~\ref{MainThm} can be easily adapted to this situation.

\begin{rmk}
\begin{enumerate}
\item Clearly, uniqueness of the critical point does not hold for $\bb=0$, that is the case of Neumann boundary conditions. Indeed in this case the first eigenfunction is constant.
\item We recall that uniqueness of the critical point was yet known for large values of $\bb$ as a consequence of~\cite[Theorem 1.1]{CF21} on strictly convex domains in any dimension. 
\item It is interesting to point out that uniqueness of the critical point holds true also for small values of $\beta>0$, despite there exist convex domains where $u_\beta$ is not $\log$-concave for such small values of $\bb$, as shown by Andrews, Clutterbuck and Hauer in~\cite[Corollary 1.5]{ACH21}.
\end{enumerate}
\end{rmk}

\subsection{Torsion problem}
\label{s1}
For $f\equiv1$, let $u_\bb$ be the Robin torsion function. We know from~\cite[Theorem 1]{vdBB14} that
\[
\norm{u_\bb}_{L^\infty(\Omega)}\le \frac{C_1}{\lambda_\bb}\log\left(C_2\left(1+\frac{\sqrt\lambda_\bb}{\beta}\right)\right),
\]
for some dimensional constants $C_1,C_2>0$ and where $\lambda_\bb$ is the first Robin eigenvalue in $\Omega$ consistently with the previous section. Taking into account that $\lambda_\bb\to\lambda_D$ as $\beta\to+\infty$ and, as pointed out in~\cite{TS08}\footnote{ See equation (6) on page 604.}, considering that
\[
\lim_{\bb\to0}\frac{\lambda_\bb}{\bb}=\frac{\abs{\partial\Omega}}{\abs{\Omega}},
\]
it follows that for all $\bb_0>0$
\[
\norm{u_\bb}_{L^\infty(\Omega)}\le C,\quad\text{for all }\bb\ge\bb_0,
\]
for some $C=C(\bb_0)>0$. Then we can apply Theorem~\ref{MainThm} to prove that $u_\bb$ has exactly one critical point for all $\bb>0$.

Let us recall that this result is not new, since it was proved in~\cite[Theorem 2]{Sak90}.
Moreover  a stronger concavity property holds true for large values of $\bb$  and in any dimension, provided the domain is strictly convex. Indeed in this case,~\cite[Theorem 1.2]{CF21} shows that $u_\bb$ is strictly $1/2$-concave.

\begin{rmk}
\label{rmkS1}
We conclude this part by pointing out that the result is no longer true if the domain is no more convex, even if very close to a convex one. In this case uniqueness of the critical point may be not true. Going into details, for all $\bb>0$, given any integer $k\in\N$ and any $\eps>0$, there exists a smooth and bounded domain $\Omega_\eps\subseteq\R^2$ such that if $u_\bb^\eps$ solves
\[
\begin{cases}
-\Delta u_\bb^\eps=1&\text{in }\Omega_\eps\\
\partial_\nu u_\bb^\eps+\bb u_\bb^\eps=0&\text{on }\partial\Omega_\eps,
\end{cases}
\]
then
\begin{enumerate}[i)]
\item $\Omega_\eps$ is star-shaped with respect to an interior point,
\item $u_\bb^\eps$ has at least $k$ maximum points,
\item $\Omega_\eps$ locally converges to the strip $\mathcal S:=\R\times(0,1)$, i.e. for all compact sets $\mathcal K\subseteq\R^2$ it holds $\abs{\mathcal K\triangle (\Omega_\eps\cap\mathcal S)}\to0$, as $\eps\to0$,
\item the curvature of $\partial\Omega_\eps$ changes sign, vanishes at exactly two points and its minimum goes to $0$ as $\eps\to0$.
\end{enumerate}

This fact comes from the corresponding situation in the case of Dirichlet boundary conditions, as proved in~\cite{GG23}.
Indeed, let $\Omega_\eps$ be the domain in~\cite[Theorem 1.1]{GG23} and $u_D^\eps$ the Dirichlet torsion function on it with $k$ non degenerate maxima (the nondegeneracy can be deduced as in~\cite[Lemma 3.3]{DRG}). Hence, it is easy to see through degree arguments and thanks to Theorem~\ref{ConvThm} that there exists $\bar\bb>0$ such that
\[
(\bar\bb,+\infty)\subseteq\set{\bb>0:u_\bb^\eps\text{ has at least }k\text{ non degenerate maxima}},
\]
since $u_D^\eps$ has at least $k$ non degenerate maxima, in virtue of~\cite[Theorem 1.1]{GG23}.
\end{rmk}

\subsection{Bounded nonlinearity $f$}
As a consequence of the previous section we can observe that the assumption~\ref{Linf} in Theorem~\ref{MainThm} is satisfied also if the nonlinearity $f$ is bounded. Indeed, let $M:=\max\abs{f}$ and $v_\bb$ be the solution of
\[
\begin{cases}
-\Delta v=M&\text{in }\Omega\\
\partial_\nu v+\bb v=0&\text{on }\partial\Omega.
\end{cases}
\]
Then as a consequence of Lemma~\ref{LemmaMP} $u_\bb\le v_\bb$ where $u_\bb$ is the solution of~\hyperlink{PB}{$(\text{P}_\bb)$} for such a bounded $f$. 
From the discussion in the preceding section, for all $\bb_0>0$, there exists $C_M=C_M(\bb_0)>0$ such that $\norm{v_\bb}_{L^\infty(\Omega)}\le C_M$, for all $\bb\ge\bb_0$ and in turn
\[
\norm{u_\bb}_{L^\infty(\Omega)}\le C_M,\quad\text{for all }\bb\ge\bb_0.
\]

\subsection{Minimal branch for nonlinear problems}
\label{s3}
It is not always possible to consider the case of a general nonlinearity $f$. Indeed stable solutions could not exist.
We already mentioned that this is the case of the Neumann problem ($\bb=0$) in convex domains, see~\cite{CH78,Mat79}. Furthermore, also for $\bb>0$ stable solutions could not exist in general. For example,~\cite[Theorem 2.1]{BMMP} says that if 
\begin{gather*}
\int_{\partial\Omega}\bb^2 u_\bb^2\left(\bb-\kappa+\frac{f(u_\bb)}{\bb u_\bb}\right)\,d\sigma<0,\\
\beta+\min_{\partial\Omega}\kappa\ge0,
\end{gather*}
where $\kappa$ is the curvature of the boundary of $\Omega$, then the solutions of~\hyperlink{PB}{$(\text{P}_\bb)$} are unstable.

That being said, let us assume $f:=\lambda g$ where $\lambda>0$ and $g$ is a smooth function such that
\begin{gather}
\label{g1}g(0)>0,\\
\label{g2}g'(t)>0,\quad\forall t\ge0.
\end{gather}
Classical examples are $g(t)=e^t$ or $g(t)=(1+t)^p$, $p>0$.
Fix any $\bb>0$. Under this set of assumptions it is well known that there exists $\lambda_\bb^*:=\lambda_\bb^*(\Omega)>0$ such that for all $\lambda\in(0,\lambda_\bb^*)$ the problem
\[
\hypertarget{PBl}{(\text{P}_\bb^\lambda)\qquad}
\begin{cases}
-\Delta u_\bb^\lambda=\lambda g(u_\bb^\lambda)&\text{in }\Omega\\
u_\bb^\lambda>0&\text{in }\Omega\\
\partial_\nu u_\bb^\lambda+\bb u_\bb^\lambda=0&\text{on }\partial\Omega,
\end{cases}
\]
admits a minimal solution $u_\bb^\lambda$. Here minimal means that $u_\bb\le U$ in $\Omega$ for any other solution $U$ of~\hyperlink{PBl}{$(\text{P}_\bb^\lambda)$}. It is also known that for $\lambda>\lambda_\bb^*$ there are no solutions.
Moreover $u_\bb^\lambda$ turns out to be stable. The same result holds true for the corresponding Dirichlet problem. In this case we denote by $\lambda_D^*:=\lambda_D^*(\Omega)>0$ the critical threshold and for $\lambda\in(0,\lambda^*_D)$ we write $u_D^\lambda$ for the minimal stable solution.
We refer, for instance, to the book~\cite{BanB} for the classical theory about this problem.

Following~\cite{BanB}, let us recall that a function $u$ is called an \emph{upper solution} of problem~\hyperlink{PBl}{$(\text{P}_\bb^\lambda)$} if satisfies
\[
\begin{cases}
-\Delta u\ge\lambda g(u)&\text{in }\Omega\\
\partial_\nu u+\bb u\ge0&\text{on }\partial\Omega.
\end{cases}
\]

We can now state the main result of this section.

\begin{thm}
\label{ThmSTAB}
Let $\Omega\subseteq\R^2$ be a smooth and bounded domain such that the curvature $\kappa$ of its boundary is strictly positive, i.e. $\kappa>0$ on $\partial\Omega$ and assume $f=\lambda g$ where $g$ is a smooth function that satisfies~\ref{g1} and~\ref{g2}. Then for all $\lambda\in(0,\lambda_D^*)$ and for all $\bb>0$ we have that the minimal solution $u_\bb^\lambda$ of~\hyperlink{PBl}{$(\text{P}_\bb^\lambda)$} has a unique critical point. In particular it is a nondegenerate maximum.
\end{thm}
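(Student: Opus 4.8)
The plan is to obtain Theorem~\ref{ThmSTAB} as an application of Theorem~\ref{MainThm} to the minimal branch $u_\bb^\lambda$. Three things have to be checked: that $f=\lambda g$ is a nonnegative, not identically vanishing smooth nonlinearity; that $u_\bb^\lambda$ is stable; and that the uniform bound~\ref{Linf} holds. The first is immediate, since~\ref{g1} and~\ref{g2} force $g>0$ on $[0,+\infty)$, whence $f=\lambda g>0$; the second is part of the classical theory of the minimal branch recalled above, see~\cite{BanB}. Thus everything reduces to producing a constant $C$, independent of $\bb$, such that $\norm{u_\bb^\lambda}_{L^\infty(\Omega)}\le C$ for all $\bb$ large enough.

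The core tool is a monotonicity property in $\bb$, obtained through the sub/supersolution method. First I would observe that whenever $\bb_1<\bb_2$, the solution $u_{\bb_1}^\lambda$ is an upper solution of $(\text{P}_{\bb_2}^\lambda)$: it solves the interior equation with equality, while on $\partial\Omega$ one has $\partial_\nu u_{\bb_1}^\lambda+\bb_2 u_{\bb_1}^\lambda=(\bb_2-\bb_1)u_{\bb_1}^\lambda>0$ because $u_{\bb_1}^\lambda>0$ by Lemma~\ref{LemmaMP}. Since $0$ is a subsolution (here $g(0)>0$ enters), by minimality this gives $u_{\bb_2}^\lambda\le u_{\bb_1}^\lambda$, so $\bb\mapsto u_\bb^\lambda$ is nonincreasing and, correspondingly, $\bb\mapsto\lambda_\bb^*$ is nondecreasing. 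Consequently, once the minimal solution is available at some level $\bb_1$, the estimate $\norm{u_\bb^\lambda}_{L^\infty(\Omega)}\le\norm{u_{\bb_1}^\lambda}_{L^\infty(\Omega)}=:C(\bb_1)$ holds for every $\bb>\bb_1$, which is exactly~\ref{Linf} on the range $(\bb_1,+\infty)$.

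It then remains to guarantee that the minimal branch is nonempty for large $\bb$, i.e.\ that $\lambda<\lambda_\bb^*$ eventually. For this I would construct an explicit upper solution out of the Dirichlet branch: fixing $\lambda'\in(\lambda,\lambda_D^*)$ and the corresponding Dirichlet minimal solution $u_D^{\lambda'}$, set $\bar u:=u_D^{\lambda'}+\eps$. For $\eps>0$ small the monotonicity of $g$ together with $\lambda'>\lambda$ yields $-\Delta\bar u=\lambda' g(u_D^{\lambda'})\ge\lambda g(u_D^{\lambda'}+\eps)=\lambda g(\bar u)$ in $\Omega$ (uniformly, since $g(u_D^{\lambda'})\ge g(0)>0$ on $\overline\Omega$), while on $\partial\Omega$, where $u_D^{\lambda'}=0$, one has $\partial_\nu\bar u+\bb\bar u=\partial_\nu u_D^{\lambda'}+\bb\eps\ge0$ as soon as $\bb\eps\ge\max_{\partial\Omega}\abs{\partial_\nu u_D^{\lambda'}}$. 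Hence $\bar u$ is an upper solution of $(\text{P}_\bb^\lambda)$ for all $\bb$ large, producing the minimal solution and, at the same time, the explicit uniform bound $\norm{u_\bb^\lambda}_{L^\infty(\Omega)}\le\norm{u_D^{\lambda'}}_{L^\infty(\Omega)}+\eps$. Combining this existence with the monotonicity of the previous paragraph, the admissible parameters fill an interval $(\bb_{\min},+\infty)$ with $\bb_{\min}<+\infty$, and for any reference level $\bb_1>\bb_{\min}$ the hypotheses of Theorem~\ref{MainThm} are met on $(\bb_1,+\infty)$. Letting $\bb_1\downarrow\bb_{\min}$ then delivers the unique nondegenerate maximum for every $\bb$ at which $u_\bb^\lambda$ is defined.

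The step I expect to be the main obstacle is precisely this uniform $L^\infty$ control and the careful handling of the admissible range of $\bb$: taken at face value, Theorem~\ref{MainThm} only produces conclusions for $\bb>\bb_0$, and the naive choice of $\bb_0$ coming from the Dirichlet comparison could be much larger than the true existence threshold $\bb_{\min}=\inf\{\bb>0:\lambda<\lambda_\bb^*\}$. The monotonicity in $\bb$ is what decouples the estimate from the Dirichlet limit, allowing $\bb_0$ to be pushed down to $\bb_{\min}$ and thereby covering every $\bb$ for which the minimal solution exists.
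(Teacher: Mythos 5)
Your proposal is correct, and its first half is exactly the paper's own argument: stability of the minimal branch is quoted from~\cite{BanB}, and the uniform bound~\ref{Linf} comes from the observation that for $\bb_1\le\bb_2$ the solution $u_{\bb_1}^\lambda$ is an upper solution of \hyperlink{PBl}{$(\text{P}_{\bb_2}^\lambda)$}, so that monotone iteration gives existence of $u_{\bb_2}^\lambda$ together with $u_{\bb_2}^\lambda\le u_{\bb_1}^\lambda$, hence~\ref{Linf} on every interval $(\bb_1,+\infty)$.

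Where you genuinely diverge from the paper is in what this comparison says about the thresholds, and there your version is the correct one. The paper asserts $\lambda_D^*\le\lambda_{\bb_2}^*\le\lambda_{\bb_1}^*$ for $\bb_2\ge\bb_1$, which would make the minimal solution exist for \emph{every} $\bb>0$ once $\lambda<\lambda_D^*$; but the upper-solution mechanism propagates existence \emph{upwards} in $\bb$, so it proves $\lambda_{\bb_1}^*\le\lambda_{\bb_2}^*\le\lambda_D^*$, i.e.\ precisely your monotonicity (the last inequality holds because a Robin solution, being positive on $\partial\Omega$, is an upper solution of the Dirichlet problem, whereas a Dirichlet solution violates the Robin boundary inequality, since $\partial_\nu u_D^\lambda<0$). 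The paper's chain is in fact false for the model nonlinearities: for $g(t)=1+t$ one has $\lambda_\bb^*=\lambda_\bb$, the first Robin eigenvalue, which is nondecreasing in $\bb$, lies below $\lambda_D=\lambda_D^*$ and tends to $0$ as $\bb\to0$; for $g(t)=e^t$, testing the equation against the first Robin eigenfunction and using $e^t\ge et$ gives $\lambda\le\lambda_\bb/e\to0$. Consequently, for fixed $\lambda\in(0,\lambda_D^*)$ the minimal solution does not exist at all when $\bb$ is small, and the clause ``for all $\bb>0$'' in Theorem~\ref{ThmSTAB} can only be read as ``for all $\bb$ at which $u_\bb^\lambda$ exists''. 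Your two additional steps are exactly what this corrected reading requires: the supersolution $u_D^{\lambda'}+\eps$ with $\lambda<\lambda'<\lambda_D^*$ (both of your inequalities check out; the interior one because $g$ is locally Lipschitz and $g\ge g(0)>0$ on $\overline\Omega$) makes the existence set nonempty for large $\bb$, and the monotonicity turns it into an interval $(\bb_{\min},+\infty)$ on which Theorem~\ref{MainThm} applies with any reference level $\bb_1>\bb_{\min}$. The only overstatement is your final clause: if the minimal solution happens to exist at $\bb=\bb_{\min}$ itself, your scheme---which needs a reference level strictly below the target $\bb$---does not reach it, so the conclusion should read ``for every $\bb>\bb_{\min}$'' rather than ``for every $\bb$ at which $u_\bb^\lambda$ is defined''. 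Apart from this endpoint, your proof is complete, and it repairs a real error in the paper's own proof.
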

\begin{proof}
It is enough to observe that for all $\bb_2\ge\bb_1>0$ one has
\[
\lambda_D^*\le\lambda_{\bb_2}^*\le\lambda_{\bb_1}^*.
\]
and for all $\lambda\in(0,\lambda_D^*)$
\[
u_D^\lambda\le u_{\bb_2}^\lambda\le u_{\bb_1}^\lambda,\quad\text{in }\Omega.
\]
Indeed it is easy to see that given $\lambda<\lambda_{\bb_1}^*$, then $u_{\bb_1}^\lambda$ is an upper solution for~\hyperlink{PBl}{$(\text{P}_{\bb_2}^\lambda)$} and finally by the Picard iteration scheme one obtain the existence of $u_{\bb_2}^\lambda$ and the monotonicity property $u_{\bb_2}^\lambda\le u_{\bb_1}^\lambda$. The same argument shows $\lambda_D^*\le\lambda_{\bb_2}^*$ and $u_D^\lambda\le u_{\bb_2}^\lambda$.

Thus all the assumptions of Theorem~\ref{MainThm} are satisfied and the claim follows.
\end{proof}

\begin{rmk}
\label{rmkS2}
As in the case of the torsion problem, the preceding Theorem~\ref{ThmSTAB} fails if the domain is close to be convex (but is not), in the sense that if $g$ is increasing, convex and $g(0)>0$, we can argue exactly as in Remark~\ref{rmkS1} to generalize~\cite[Theorem 1.1]{DRG} to the case of Robin boundary conditions, with $\bb$ large, for $\lambda\in(0,\lambda_D^*(-1,1))$, here $\lambda_D^*(-1,1)$ is the threshold of the one dimensional Dirichlet problem. Then we can see that uniqueness of the critical point is, in general, no more verified as soon as the boundary of the domain contains points with negative curvature, despite the domain is close to be convex.

Here again to argue as in Remark~\ref{rmkS1} we need the nondegeneracy of the critical points of the corresponding solution of the Dirichlet problem $u^\eps_D$, which is not explicitly stated in~\cite[Theorem 1.1]{DRG}. Anyway, checking the proof of that theorem, it is easy to see that the critical points of $u^\eps_D$ are nondegenerate: indeed in any compact $K\subseteq\Omega_\eps$, $\norm{u^\eps_D-(v_0+\eps\phi)}_{L^\infty(K)}\le C(K)\eps^2$ for some $C(K)>0$, where $v_0$ and $\phi$ are fixed functions such that $v_0+\eps\phi$ solves the linearized problem in a domain containing $\Omega_\eps$ and has at least $k$ nondegenerate critical points, contained in a fixed, i.e. independent from $\eps$, compact subsets of $\Omega_\eps$. Moreover, such critical points satisfy $\det\mathrm{Hess}{[v_0+\eps\phi]}=\mathcal O(\eps)$, as $\eps\to0$, see~\cite[Lemma 2.3]{DRG}. It is then enough to use classical interior regularity estimate to obtain $\norm{u^\eps_D-(v_0+\eps\phi)}_{\mathcal C^2(K)}\le C(K)\eps^2$, where $C(K)>0$ may be larger than before. This estimate, combined with the fact that the critical points of $v_0+\eps\phi$ are nondegenerate, contained in fixed compact subsets and $\det\mathrm{Hess}{[v_0+\eps\phi]}=\mathcal O(\eps)$, implies that $u^\eps_D$ admits at least $k$ nondegenerate critical points, as claimed.
\end{rmk}

\bibliographystyle{abbrv}
\bibliography{CriticalPointsRobin-CURV_0.bib}

\end{document}